\long\def\symbolfootnote[#1]#2{\begingroup%
\def\thefootnote{\fnsymbol{footnote}}\footnote[#1]{#2}\endgroup}
\long\def\symbolfootnote[#1]#2{\begingroup%
\def\thefootnote{\fnsymbol{footnote}}\footnote[#1]{#2}\endgroup}
\def\vint{\mathop{\mathchoice%
          {\setbox0\hbox{$\displaystyle\intop$}\kern 0.22\wd0%
           \vcenter{\hrule width 0.6\wd0}\kern -0.82\wd0}%
          {\setbox0\hbox{$\textstyle\intop$}\kern 0.2\wd0%
           \vcenter{\hrule width 0.6\wd0}\kern -0.8\wd0}%
          {\setbox0\hbox{$\scriptstyle\intop$}\kern 0.2\wd0%
           \vcenter{\hrule width 0.6\wd0}\kern -0.8\wd0}%
          {\setbox0\hbox{$\scriptscriptstyle\intop$}\kern 0.2\wd0%
           \vcenter{\hrule width 0.6\wd0}\kern -0.8\wd0}}%
          \mathopen{}\int}
\newcommand{\loc}{\rm loc}
\newcommand{\Om}{\Omega}
\newcommand{\C}{\mathbb{C}}
\newcommand{\R}{\mathbb{R}}
\newcommand{\Hei}{{\mathbb{H}}_{1}}
\newcommand{\Hein}{{\mathbb H}_{n}}
\newcommand{\Hn}{{\mathbb H}_{n}}
\newcommand{\HW}{{HW}^{1,s}}
\newcommand{\HWtwloc}{{HW}^{1,2}_{loc}}
\newcommand{\HWloc}{{HW}^{1,s}_{loc}}
\newcommand{\HWzero}{{HW}^{1,s}_{0}}
\newcommand{\N}{{\mathcal N}}
\newcommand{\opL}{{\mathcal L}}
\newcommand{\g}{\mathfrak{g}}
\newcommand{\bd}{\partial}
\newcommand{\dist}{{\rm dist}}
\newcommand{\eps}{\epsilon}
\newcommand{\ep}{\epsilon}
\def \harm{\mathcal{H}}
\def \wharm{w\mathcal{H}}
\def \opl{\mathcal{L}}
\def \Hau{\mathcal{H}^3}
\def \dL{d_{\opL}}
\definecolor{blau}{rgb}{0.1,0.0,0.9}
\definecolor{funk}{rgb}{0.1,0.4,0.9}
\newcounter{komcounter}
\numberwithin{komcounter}{section}
\newcommand{\dbd}[2]{\frac{\partial#1}{\partial #2}}
\def\XXint#1#2#3{{\setbox0=\hbox{$#1{#2#3}{\int}$}
     \vcenter{\hbox{$#2#3$}}\kern-.5\wd0}}
\theoremstyle{plain}
\newtheorem{theorem}{Theorem}[section]
\newtheorem{thm}[theorem]{Theorem}
\newtheorem{lem}{Lemma}[section]
\newtheorem{cor}{Corollary}[section]
\newtheorem{prop}{Proposition}[section]
\newtheorem{observ}{Observation}[section]
\theoremstyle{definition}
\newtheorem{defn}{Definition}[section]
\newtheorem{ex}{Example}
\newtheorem{rem}{\textnormal{\textbf{Remark}}}
\begin{document}

\title {Mean value property and harmonicity on Carnot-Carath\'eodory groups}

\author{
Tomasz Adamowicz%{\small{$^1$}}
\\
\it\small Institute of Mathematics, Polish Academy of Sciences \\
\it\small ul. \'Sniadeckich 8, 00-656 Warsaw, Poland\/{\rm ;}
\it\small T.Adamowicz@impan.pl
\\
\\
Ben Warhurst
\\
\it\small Institute of Mathematics,
\it\small University of Warsaw,\\
\it\small ul.Banacha 2, 02-097 Warsaw, Poland\/{\rm ;}
\it\small B.Warhurst@mimuw.edu.pl
}
\date{}
\maketitle

\footnotetext[1]{T. Adamowicz and B. Warhurst were supported by a grant Iuventus Plus of the Ministry of Science and Higher Education of the Republic of Poland, Nr 0009/IP3/2015/73.}

\begin{abstract}
 We study strongly harmonic functions in Carnot--Carath\'eodory groups defined via the mean value property with respect to the Lebesgue measure. For such functions we show their Sobolev regularity and smoothness. Moreover, we prove that strongly harmonic functions satisfy the sub-Laplace equation for the appropriate gauge norm and that the inclusion is sharp. We observe that spherical harmonic polynomials in $\Hei$ are both strongly harmonic and  satisfy the sub-Laplace equation. Our presentation is illustrated by examples.
\newline
\newline \emph{Keywords}: Carnot group, harmonic, Heisenberg group, Lie algebra, Lie group, Laplace, maximum principle, mean value property, regular point, strongly harmonic, subelliptic equation, sub-Riemannian, weakly harmonic.
\newline
\newline
\emph{Mathematics Subject Classification (2010):} Primary: 35H20; Secondary: 31E05, 53C17.
\end{abstract}

\section{Introduction}

The main subject of our studies are harmonic functions on Carnot-Carath\'eodory groups with emphasis on the setting of Heisenberg groups since in this case the pseudodistance induced by the fundamental solution of the sub-Laplacian is in fact a metric (see below for relevant definitions). Following works \cite{agg,GG} we define harmonic functions via the mean value property with respect to the underlying measure, i.e. we call a locally integrable function $f:\Om\to \R$ \emph{strongly harmonic} in $\Om$,  if the following inequality holds for all balls $B(p, r)\Subset \Om$ with $p\in \Om$ and $r>0$:
 \begin{equation}\label{eq-intro}
  f(p)=\vint_{B(p,r)} f(q) dq.
  \end{equation}
 Here, $\Om$ stands for a domain in a given Carnot--Carath\'eodory group, $dq$ denotes the corresponding Lebesgue measure, and the balls $B(p, r)$ are defined with respect to a given metric on $\Om$. We refer to Section~\ref{subs-mvp} for further definitions and more on motivations for our investigations.

  Harmonic functions considered in \cite{agg} in general metric measure spaces are only H\"older regular, e.g., on geodesic spaces equipped with doubling measures or measures satisfying the annular decay condition, see \cite[Theorems 4.1, 4.2]{agg}, or locally Lipschitz regular for uniform measures or doubling measures on spaces supporting a $(1,p)$-Poncar\'e inequality, see \cite[Proposition 5.2, Theorem 5.1]{agg}. In the setting of Carnot groups, the group structure and the presence of the Euclidean coordinates allow us to expect that harmonic functions exhibit higher regularity properties. Indeed, in Section~\ref{subs-sob} we show that functions satisfying
  \eqref{eq-intro} belong to the horizontal Sobolev spaces $\HWloc$ for any $s>1$, see Theorem~\ref{thm-diff-harm-fun}. The proof relies on measure theoretic properties of harmonic functions. Furthermore, by using the convolution and scaling techniques available in Carnot groups, we show in Theorem~\ref{thm-mvp-smooth} the smoothness of harmonic functions. It turns out that for the proof of smoothness, one needs \eqref{eq-intro} to hold only for balls defined by a pseudodistance (quasimetric), i.e. the triangle inequality for $d$ in \eqref{eq-intro} can be relaxed.

  Another topic we are especially interested in, is the interplay between harmonic functions and solutions to the subelliptic Laplace equation on a Carnot group (called the $\opl$-harmonic equation). In Theorem~\ref{thm-mvp} we show that functions possessing property \eqref{eq-intro} satisfy the sub-Laplace equation provided that the balls in the mean value property are considered with respect to the pseudonorm given by the fundamental solution of the sub-Laplace operator. As a corollary of Theorem~\ref{thm-mvp} we obtain a variant of the Hadamard three-spheres theorem for strongly harmonic functions.  Let us also mention that a counterpart of Theorem~\ref{thm-mvp} in more general metric spaces is not known and is a subject of an ongoing investigation to determine the relation between strongly harmonic functions and the $p$-harmonic functions defined as local minima of the $p$-Dirichlet energy with respect to weak upper gradients.

  Another aspect of harmonicity studied in our work relates to the fact that the subelliptic harmonic functions are known to satisfy the kernel-type mean value property, see Formula~\eqref{mvpvol} in Theorem~\ref{thm-Lharm} below and Appendix for its proof. Thus, we are also interested in studying relation between this type of property and \eqref{eq-intro}, see Section~\ref{subs-conv}.

  In Section~\ref{sec-harm-hei1} we show that the intersection of the class of $\opl$-harmonic and strongly harmonic functions, considered with respect to the $\opl$-gauge distance, contains spherical harmonic polynomials, called for short, spherical harmonics. At the first glance it might be surprising that such a class exists, taking into account that a spherical harmonic must satisfy two types of mean value properties, namely the one given in Definition~\ref{defn-harm} and the one defined by \eqref{mvpvol}. Moreover, we discuss an example of a spherical harmonic function (and thus an $\opl$-harmonic function) which fails to be strongly harmonic, see Example~\ref{ex-sph}. Finally, in Section~\ref{sec-harm-hei1} we also propose two open questions on finding all spherical harmonics which are strongly harmonic.

  In the last section of our work we briefly discuss a notion of determining set and prove that under additional assumptions, a dense subset of a domain in $\Hei$ is determining for a strongly harmonic function, see Section~\ref{sec-det} for details.

  Our presentation is largely self contained and for the readers convenience in Sections~\ref{sec-prel}-\ref{sect-sublapl} we recall necessary definitions and observations regarding Carnot-Carath\'eodory groups, pseudonorms, subelliptic Laplacians and their fundamental solutions.

\section{Preliminaries}\label{sec-prel}

 In this section we recall some rudimentary properties of the geometry of Carnot-Carath\'eodory groups (CC-groups, for short). Upon recalling the definition of a CC-group, we illustrate the notions with examples of groups playing an important role in our studies, namely the $H$-type groups and the Heisenberg groups $\Hein$. Then in Section~\ref{sect-pseudo} we provide basic information about pseudonorms and pseudodistances. Definitions and results presented in that section will be used in our studies of the strongly harmonic functions, cf. Definition~\ref{defn-harm}, and their relations to the $\opl$-harmonic operator, see Section~\ref{sect5}. Finally, in Section~\ref{sect-conv} we recall the notion of convolution and provide its properties needed in our further presentation.

\subsection{Carnot--Carath\'eodory Groups}
 A Lie algebra $\mathfrak{g}$ is said to be stratified if $\mathfrak{g}= \mathfrak{g}_1 \oplus \dots \oplus \mathfrak{g}_s$, where  $\mathfrak{g}_{i+1} = [\mathfrak{g}_{1}, \mathfrak{g}_{i}]$ and $[\mathfrak{g}_1,\mathfrak{g}_s]=\{0\}$. The group $G=\exp(\mathfrak{g})$ is also said to be a stratified and we adopt the notation $\tau_p(q)=pq$ for the left translation of $q \in G$ by $p \in G$. If we choose an orthonormal basis for $\mathfrak{g}$, then the associated scalar product defines a Euclidean length on $\mathfrak{g}$, which we denote by $|X|$ for each $X \in \mathfrak{g}$. In this case, we call $G$ a Carnot--Carath\'eodory group (a Carnot group, for short) and $\mathfrak{g}$ a Carnot algebra. The normal model of $G$ is denoted $(\mathfrak{g}, *)$ where $*$ is given by the Baker--Campbell--Hausdorff formula, i.e., the exponential is an isomorphism. Dilation $\delta_\lambda$ of $\mathfrak{g}$ by $\lambda>0$ is given by   $\delta_\lambda(X)=\sum_i \lambda^i X_i$ where $X_i$ is the projection of $X$ onto $\mathfrak{g}_i$. An immediate consequence of the definition is that $\delta_\lambda \in {\rm aut} (\mathfrak{g})$ (=the automorphism group of the Lie algebra). Conjugating $\delta_\lambda$ with the exponential map defines dilation of $G$ which is again in ${\rm aut} (G)$, since for the normal model we have  $ {\rm aut} (G)$ and ${\rm aut}(\mathfrak{g})$ are one and the same.

We illustrate the above discussion with two main examples of the Carnot-Carath\'eodory groups. In what follows we will frequently appeal to these examples and assume that the reader is familiar with them.

\begin{ex}\label{Hn}
 The $n$-dimensional Heisenberg group  $G=\Hn$ is the Carnot group with a $2$-step Lie algebra and orthonormal basis $\{ X_1,\dots,X_{2n},Z\}$ such that
$$
\mathfrak{g}_1={\rm Span}\, \{X_1, \dots, X_{2n}\},  \quad \mathfrak{g}_2={\rm Span}\, \{Z\}
$$
and the nontrivial brackets are $[X_i,X_{n+i}]=Z$ for $i=1,\dots,n$.

In particular, if $n=1$, then a natural basis for the left invariant vector fields is given by the following vector fields:
\begin{align*}
\tilde X = \dbd{}{x}+2y\dbd{}{t}, \quad \tilde Y =\dbd{}{y}-2x\dbd{}{t} \quad {\rm and } \quad  \tilde T= \dbd{}{t},
\end{align*}
where $[\tilde X, \tilde Y ]=-4 \tilde T$. Note that these fields are defined with respect to the multiplication given by $V*W = V+W-4[V,W]$ for $V,W\in \mathfrak{g}$, which is not the Baker--Campbell--Hausdorff formula. The reason for choosing this slightly less orthodox multiplication is that it leads to a simpler expression for the Folland-Kaplan pseudonorm derived from the fundamental solution of $\mathcal{L} =\tilde X^2 + \tilde Y^2$ (see Example \ref{FolKapPseud} below). 
\end{ex}

\begin{ex}\label{H-type}
An $H$-type group is a connected, simply connected $2$-step Carnot
group whose Lie algebra satisfies the following additional property:
For each $Z \in \mathfrak{g}_2$ the homomorphism $J_Z: \mathfrak{g}_1 \to \mathfrak{g}_1$ defined by
$$
\langle J_Z X, Y \rangle = \langle Z, [X, Y] \rangle, \quad \hbox{for all }\, \, X,Y \in \mathfrak{g}_1
$$
satisfies
$$
J^2_Z = -\langle Z,Z\rangle I.
$$
\end{ex}

\begin{defn}[Change of Basis]\label{defn-basis} A basis $\mathcal{E}$ of $\mathfrak{g}$ is said to be \emph{adapted}, if it has the form $$\mathcal{E}=\{E_1^1,\dots,E_{N_1}^1, \dots, E_1^s,\dots,E_{N_s}^s\}, $$ where $\mathfrak{g}_k = {\rm span} \{ E_1^k, \dots,E_{N_k}^k \}$. Moreover, we define $N:=N_1+N_2+\ldots+N_s$.
\end{defn}

Note that if $ \tilde{\mathcal{E}}$ is also adapted to $\mathfrak{g}$, and $A$ is the transition matrix defined by $\pi_{ \mathcal{E} }(X)=\pi_{  \tilde{\mathcal{E}} }(AX)$ where $\pi_{ \mathcal{E} }$ and $\pi_{  \tilde{\mathcal{E}} }$ are the coordinate projections, then $A$ is a strata-preserving automorphism of $\mathfrak{g}$ and a strata-preserving isomorphism of $(\mathfrak{g}, *)$.

 The left translates of $ \mathfrak{g}_1$ define the horizontal subbundle $\mathcal{H} \subset TG$ which is naturally equipped with a left invariant sub-Riemannian metric $d_s$, defined by the left translation of the scalar product restricted to $\mathfrak{g}_1$. It is easy to see using normal coordinates, that a left Haar measure on $G$ is also a right Haar measure, and is simply the Lebesgue measure, up to scale, defined by the ambient Euclidean structure of the normal model.

If we choose a basis $\{E_1^1,\dots,E_{N_1}^1\}$ of $\mathfrak{g}_1$, then the left and right invariant vector fields corresponding to $E_i^1$ are defined as follows:
\begin{align*}
\tilde X_i^l u(p)= \frac{d}{dt} u(pe^{tE_i^1})|_{t=0} \qquad \tilde X_i^r u(p) = \frac{d}{dt} u(e^{tE_i^1}p)|_{t=0}. %\label{lrvf}
\end{align*}
The corresponding left and right invariant sub-Laplacians are
\begin{align}
\mathcal{L}^L u(p) =\sum_i {\tilde X_i^l} {\tilde X_i^l} u(p) \qquad \mathcal{L}^R u(p) =\sum_i {\tilde X_i^r} {\tilde X_i^r} u(p).
\label{sublap}
\end{align}
 \noindent \emph{We present our results in terms of left invariant fields and adopt the convention that unless otherwise stated, $\tilde X_i$ and $\mathcal{L}$ will be left invariant.}

\smallskip \noindent
We emphasize the role of the sub-Laplacians among the sub-elliptic operators in the following remark.
\begin{rem}
Let us consider a generalized sub-Laplacian, namely a left invariant operator of the form:
\begin{align}
L_Au=\sum_{ij}a_{ij}\tilde X_i \tilde X_j u, \label{L_A}
\end{align}
where $A=(a_{ij})$ is a symmetric and positive definite matrix with constant real coefficients. Set $B:=A^{1/2}$, then $B$ determines a strata preserving automorphism such that
\begin{align*}
L_A =\sum_k \tilde Y_k^2 %\label{L_AY}
\end{align*}
where $\tilde Y_k = \sum_l b_{kl} \tilde X_l$. In other words, operator \eqref{L_A} can be reduced to the sub-Laplacian by a strata preserving automorphism.
\end{rem}

\subsection{Pseudonorm and Pseudodistance}\label{sect-pseudo}

The following definitions and remarks are crucial for our considerations and are largely applied in Sections~\ref{sect-sublapl}-\ref{sect5}. For further studies of pseudonorms and pseudodistances we refer, for instance,  to a book by Bonfiglioli--Lanconelli--Uguzzoni~\cite{blu}.

\begin{defn}\label{defn-pseudon}
A continuous function  $\mathcal{N} :G \to [0, \infty )$ is said to be a symmetric \emph{pseudonorm} on a Carnot group $G$, if $\mathcal{N}$ satisfies the following conditions:
\begin{enumerate}
\item $\mathcal{N}(\delta_r (p)) = r \mathcal{N}(p)$ for every $r > 0$ and $p \in G$,
\item $\mathcal{N}(p) > 0$ if and only if $p \ne 0$,
\item $\mathcal{N}$ is symmetric, i.e., $\mathcal{N}(p^{-1})=\mathcal{N}(p)$ for every $p \in G$.
\end{enumerate}
\end{defn}

All pseudonorms of interest here will be symmetric, and so their symmetry will not be emphasized in their reference.
\begin{ex}\label{psdnrm1}
	Let $G$ be a group expressed in coordinates by choosing an orthonormal basis of $\g$. Let us further write any point $p\in G$ as $p=\sum_{i=1}^s P_i$, where $P_i \in \mathfrak{g}_i$. Then it follows that for every $\alpha \geq 1$, a function
\begin{align}
\mathcal{N}(p)=\left( \sum_{i=1}^s |P_i(p)|^{\alpha/i}  \right )^{1/\alpha} \label{canonN}
\end{align}
 defines a pseudonorm on $G$.  For an appropriate choice of $\alpha$, $\mathcal{N}$ is $C^\infty $ on $G\setminus\{0\}$, e.g., $\alpha=2s!$.
\end{ex}

\begin{ex} \label{FolKapPseud}
%In this setting the Folland-Kaplan pseudonorm is simplified in sense that the the coefficient "16" disappears.
Let $G=\Hei$ be the first Heisenberg group. Then, the Folland-Kaplan pseudonorm derived from the fundamental solution of $\mathcal{L} = \tilde X^2 + \tilde Y^2$ is given by
\begin{align*}
\mathcal{N}(z,t) =  ( |z|^4 + t^2)^{1/4}, %\label{HN1}
\end{align*}
where coordinates of any point $p\in \Hei$ are $p=(z,t)$ with $z\in \C$ and $t\in \R$. It follows that
\begin{align*}
|\nabla_0 \mathcal{N}(z,t)|^2=\frac{|z|^2}{\sqrt{|z|^4 + t^2}}.
\end{align*}
\end{ex}

It turns out that a pseudonorm defines a pseudodistance (quasimetric) on a group.

\begin{defn}\label{defn-pseudod}
We say that a left invariant pseudodistance (quasimetric) is \emph{induced} by a pseudonorm $\mathcal{N}$, if
$$
d(p,q):=\mathcal{N}(p^{-1}q),\qquad\hbox{for any }p,q\in G.
$$
In particular:
\begin{itemize}
\item[(i)] $d(p,q)\geq 0$ with equality if and only if $p=q$,
\item[(ii)] there exists a constant $c>0$, such that
   $$
    d(p_1,p_2)\leq c (d (p_1,p_0) +d(p_0,p_2)) \quad \hbox{ for all } p_0,p_1,p_2 \in G.
   $$
\end{itemize}
\end{defn}

Although pseudodistances are not metrics, they still define a reasonable family of sets that play the role of balls.

\begin{defn} Let $d$ be a given pseudometric on $G$. The unit ball with respect to $d$, centered at $0\in G$ is denoted by $B(0,1)$, and a ball of radius $r$ centered at $p\in G$ is the set
$$
B(p,r)=p\delta_r(B(0,1)).
$$
Note that it follows that $|B(p,r)|=r^Q |B(0,1)|$, where $Q=\sum_i i\, {\rm dim}\, \mathfrak{g}_i$ is the Hausdorff dimension of $G$.
\end{defn}

We finish the presentation of basic properties of pseudonorms and pseudodistances with the observation that all pseudonorms on a given group $G$ are equivalent.
\begin{rem}[cf. Proposition 5.1.4 in \cite{blu}]\label{rem-psn}
For any pair of pseudo-norms $\mathcal{N}$ and $\mathcal{N}'$ on $G$, there exists $c>0$ such that
$$
\frac{1}{c} \mathcal{N}(p) \leq \mathcal{N}'(p) \leq c\,\mathcal{N}(p).
$$
 Moreover, if $d_s$ stands for a sub-Riemannian distance in $G$, then $\mathcal{N}'(p)=d_s(0,p)$ is a pseudonorm and so $d$ and $d_s$ are equivalent pseudodistances.
\end{rem}

\subsection{Convolution}\label{sect-conv}

 Below we recall the notion of the convolution and some of its integrability and regularity properties. The section is based on presentation in Chapter 1 in Folland--Stein~\cite{FolStn}.

Let $G$ be a stratified group. The convolution of two measurable functions $g$ and $h$ on $G$ is defined as follows:
$$
g * h(p)=\int_{G} g(q)h(q^{-1}p)dq=\int_{G} g(pq^{-1})h(q)dq,
$$
provided the integrals converge. In contrast with Euclidean spaces, the convolution is not commutative. The basic properties are as follows, cf. Propositions 1.19, 1.20 and pg. 22 in \cite{FolStn}:
\begin{enumerate}
\item Let $1\leq r,s,t < \infty$ such that $ r^{-1}+s^{-1}=t^{-1}+1$. If $g \in L^r(G)$ and $h \in L^s(G)$, then $g * h \in L^t(G)$ and $\|g * h\|_t \leq c(r,s)\|g \|_r\, \|h\|_s$.
\item If $L$ is a left invariant vector field, then $L(g * h)=g * (Lh)$ provided that $h$ is $C^2$-smooth.
\item If $R$ is a right invariant vector field, then $R(g * h)=(Rg) * h$ provided that $g$ is $C^2$-smooth.
\item If $L$ and $R$ are corresponding left and right invariant vector fields (i.e., they agree at the identity), then  $(Lg) * h = g * (Rh)$ when $g$ and $h$ are suitably smooth.
\item If $\psi \in L^1(G)$ and $\psi_\eps =\eps^{-Q} \psi \circ \delta_{1/\eps}$, then $\int_{G} \psi_\eps(q)dq=\int_{G} \psi(q) dq $.
\item If $h \in L^r(G)$, $1 \leq r <\infty$ and $\int_{G} \psi(q)dq=c$, then $\|h * \psi_\eps -ch\|_r \to 0$ as $\eps \to 0$.
\item If $h$ is continuous and bounded on $\Omega$, then $h * \psi_\eps\to ch$ locally uniformly on $\Omega$ as $\eps \to 0$.
\end{enumerate}

For the sake of completeness of the discussion, we further recall that convolutions can be defined also in the setting of distributions. Namely, we set
$(\Lambda *g,\phi) =(\Lambda, \phi * \tilde g)$ and $(g*\Lambda,\phi) =(\Lambda,  \tilde g * \phi )$ for any $\phi\in C^{\infty}_{0}(G)$, where $\tilde g(p):=g(p^{-1})$. Therefore, it then follows that if $\Lambda_u$ is a distribution defined as $(\Lambda_u,\phi)=\int_G u \phi $, then
\begin{align*}
(\Lambda_u *g,\phi) &=(\Lambda_{u*g} , \phi ) \quad {\rm and} \quad ( g*\Lambda_u,\phi) =(\Lambda_{g*u}, \phi ). %\label{dist-conv}
\end{align*}

\section{Sub-Laplacians, Fundamental solutions and Mean value property}\label{sect-sublapl}

 The first goal of this section is to recall the weak formulation of the sub-Laplacian together with a notion of the fundamental solution and its relation with pseudonorms. Then, in Section~\ref{subs-mvp} we give the definition of strongly harmonic functions, one of the main objects studied in this work. Such functions are defined originally in \cite{agg} in the setting of metric measure spaces.

 We begin with recalling the horizontal Sobolev spaces and the weak formulation of the $\opl$-harmonic equation, cf. \eqref{sublap} and \eqref{L_A}.

  Let $\Omega \subset G$ be open. Recall that a $C^2(\Omega)$-smooth function $u$ is said to be $\mathcal{L}$-harmonic on $\Omega$, if $\mathcal{L}u(p)=0$ for all $p\in \Omega$, cf.~\eqref{sublap}. In order to weaken the $C^2$-regularity assumption one considers $\mathcal{L}$ on an appropriate Sobolev space.

Recall that $N_1$ denotes the dimension of $\mathfrak{g}_1$, cf. Definition~\ref{defn-basis}. For $1<s<\infty$, we say that a function $u:\Omega\to \R$ belongs to the \emph{horizontal Sobolev space} $\HW(\Omega)$, if $u\in L^s(\Om)$ and for all $i=1,\ldots, N_1$, the horizontal derivatives $\tilde X_i u$ exist in the distributional sense and are represented by elements of $L^s(\Om)$. The space $\HW(\Omega)$ is a Banach space with respect to the norm
\[
\|u\|_{\HW(\Omega)}\,=\,\|u\|_{L^s(\Omega)}+\|(\tilde X_1 u,\ldots, \tilde X_{N_1} u)\|_{L^s(\Omega)}.
\]
In the similar way we define the local spaces $\HWloc (\Omega)$. Moreover, we define $\HWzero(\Omega)$ as the closure of $C_{0}^{\infty}(\Omega)$ in the $\HW(\Omega)$-norm. The horizontal gradient $\nabla_0 u$ of an element $u \in \HWloc (\Omega)$ is defined by 
\[
\nabla_0 u = \sum_{i=1}^{N_1} ( \tilde X_i u) \tilde X_i.
\]

If $u \in \HWtwloc (\Om)$, then $u$ is said to be  weakly  $\mathcal{L}$-harmonic on $\Omega$ if
\begin{equation*}% \label{sub-p-harm}
  (\mathcal{L} \Lambda_u, \phi):=\int_{\Om} u(q) \mathcal{L} \phi(q) dq = -\int_{\Om} \langle \nabla_0 u(q), \nabla_0 \phi(q) \rangle dq =0
\end{equation*}
 for all $\phi \in C_0^\infty(\Om)$.

\subsection{Fundamental solutions}

  The notion of the fundamental solution of the Laplace operator $L=\Delta$ is well known in the setting of domains in $\R^n$. Among many applications, fundamental solutions are used to solve the related Poisson equation $\Delta u=f$ via the Newtonian potentials of $f$ when $f$ is sufficiently regular, see e.g. Chapter 4 in Gilbarg--Trudinger~\cite{gt} for the discussion of this classical topic.

Let us also recall the related notion of a \emph{hypoelliptic} operator. We say that a differential operator $P$ with smooth coefficients, defined on an open subset $\Om\subset \R^n$ is called hypoelliptic, if for every distribution $u$ defined on an open subset $\Om'\subset \Om$ satisfying $Pu\in C^{\infty}(\Om')$, it holds that also $u\in C^{\infty }(\Om')$. The connection between the hypoellipticity and the fundamental solutions can be formulated as follows (see e.g. H\"ormander~\cite{hor}): Let $\{ \tilde Y, \tilde X_1 ,\dots, \tilde X_m\}$ be smooth vectors fields on $\Omega\subset \R^n$. Suppose
$$
\hbox{rank Lie}\, \{\tilde Y,\tilde X_1 ,\dots, \tilde X_m\}(x) = n, \quad \hbox{for all }x \in \Omega.
$$
Then the operator
$$
L =\tilde Y + \sum_{j=1}^m \tilde X_j^2
$$
is $C^\infty(\Omega)$-hypoelliptic. Thus, every distributional solution to $Lu = f$ is represented by a function in $C^\infty(\Omega)$ when $f \in C^\infty(\Omega)$.
%\end{thm}
%
%For the Laplacian on $\R^n$ we have  $\Gamma(x) = c ||x||^{ 2-n}$ for $n \geq 3$.

We move now our discussion of fundamental solutions to the setting of CC-groups.

\begin{defn}[Fundamental solution] Let $G$ be a stratified Lie group and let $\mathcal{L}$ be a sub-Laplacian on $G$. A function $\Gamma : G \setminus {0} \to \R$ is called a \emph{fundamental solution} for $\mathcal{L}$ if:
\begin{itemize}
\item[(i)] $\Gamma \in C^\infty(G \setminus \{0\})$,
\item[(ii)] $\Gamma \in L^1_{\loc} (G)$ and $\Gamma (q) \to 0$, when $q \to \infty$,
\item[(iii)] $\mathcal{L} \Gamma = {\rm Dirac}_0$, where ${\rm Dirac}_0$ is the Dirac measure supported on $\{0\}$. More explicitly
\begin{equation}
\int_G \Gamma(q) \mathcal{L}\phi(q)dq = \phi(0), \quad \hbox{ for all }\,\phi \in C_0^\infty(G). \label{phi0}
\end{equation}
\end{itemize}
\end{defn}

The following result states that a sub-Laplace operator possesses a fundamental solution.

\begin{thm}[Theorem 2.1 in Folland~\cite{Fol}, Theorem 5.3.2 in \cite{blu}]\label{thm-fund-exist} Let $\mathcal{L}$ be a sub-Laplacian on $G$. Then there exists a fundamental solution $\Gamma$ for $\mathcal{L}$ satisfying \eqref{phi0}.
\end{thm}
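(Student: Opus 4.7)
The plan is to build $\Gamma$ by integrating the heat kernel of $\mathcal{L}$ in time and then verify the three defining properties using hypoellipticity and dilation covariance.

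First, stratification of $\mathfrak g$ guarantees that $\{\tilde X_1,\dots,\tilde X_{N_1}\}$ and their iterated brackets span $\mathfrak g$ at every point, so H\"ormander's theorem renders $\mathcal{L}$ and $\partial_t-\mathcal{L}$ hypoelliptic. Since left invariant vector fields on a Carnot group are divergence free with respect to Lebesgue measure, $\tilde X_i^{\ast}=-\tilde X_i$, hence $\mathcal{L}$ is formally self-adjoint and non-positive on $L^2(G)$, and the functional calculus produces a contraction semigroup $e^{t\mathcal{L}}$. Left invariance forces this semigroup to act as right convolution with a distribution $h_t$, and hypoellipticity of $\partial_t-\mathcal{L}$ promotes $h_t$ to an element of $C^\infty(G\times(0,\infty))$ with $h_t>0$ and $\int_G h_t\,dq=1$. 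The identity $\mathcal{L}(u\circ\delta_\lambda)=\lambda^{2}(\mathcal{L}u)\circ\delta_\lambda$ together with uniqueness of the heat kernel yields the fundamental scaling
\begin{equation}\label{eq-plan-htscale}
h_t(p)=\lambda^{Q}\,h_{\lambda^{2}t}(\delta_\lambda p),\qquad \lambda>0,\ p\in G,
\end{equation}
and subelliptic estimates show $h_1$ is smooth and rapidly decreasing, so $h_t(p)\le C t^{-Q/2}\Phi(\mathcal{N}(p)/\sqrt t)$ for some rapidly decreasing $\Phi$ and any homogeneous pseudonorm $\mathcal{N}$.

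Next, set
\[
\Gamma(p):=-\int_0^\infty h_t(p)\,dt,\qquad p\in G\setminus\{0\}.
\]
Splitting the integral at $t_0=\mathcal{N}(p)^2$ and using the pointwise bound yields $|\Gamma(p)|\le C\,\mathcal{N}(p)^{2-Q}$; combined with \eqref{eq-plan-htscale} this upgrades to the exact homogeneity $\Gamma(\delta_\lambda p)=\lambda^{2-Q}\Gamma(p)$. In the relevant setting $Q\ge 3$, so $\mathcal{N}^{2-Q}$ is locally integrable near the origin, giving $\Gamma\in L^1_{\loc}(G)$, and $\Gamma(q)\to 0$ as $q\to\infty$. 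Differentiating under the integral sign and using $\mathcal{L}h_t=\partial_t h_t$ together with decay of $h_t$ as $t\to 0^+$ (for $p\neq 0$) and as $t\to\infty$ gives $\mathcal{L}\Gamma=0$ pointwise on $G\setminus\{0\}$, whence hypoellipticity of $\mathcal{L}$ yields $\Gamma\in C^\infty(G\setminus\{0\})$.

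Finally, for $\phi\in C_0^\infty(G)$, self-adjointness of $\mathcal{L}$ and the heat equation give
\[
\int_G h_t(q)\mathcal{L}\phi(q)\,dq=\partial_t\!\int_G h_t(q)\phi(q)\,dq\qquad(t>0),
\]
and integrating over $t\in(0,T)$ before sending $T\to\infty$ produces
\[
\int_G \Gamma(q)\mathcal{L}\phi(q)\,dq=-\lim_{T\to\infty}\int_G h_T(q)\phi(q)\,dq+\lim_{t\to 0^+}\int_G h_t(q)\phi(q)\,dq=\phi(0),
\]
since $h_t\ast\phi\to\phi$ uniformly as $t\to 0^+$ (approximation of identity) while $\|h_t\|_\infty\to 0$ as $t\to\infty$ by \eqref{eq-plan-htscale}. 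The principal obstacle is the construction step: proving that $h_1$ is rapidly decreasing (equivalently, establishing the off-diagonal decay of $h_t$) requires genuine subelliptic analysis beyond mere hypoellipticity. Once the heat kernel and its scaling are in place, the remaining verifications are essentially formal.
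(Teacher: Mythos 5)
The paper does not prove this statement at all: it is quoted verbatim from the literature (Theorem 2.1 in Folland's 1975 paper and Theorem 5.3.2 in Bonfiglioli--Lanconelli--Uguzzoni), so there is no internal proof to compare against. Your heat-semigroup construction is essentially the classical argument behind the cited result, and it is sound in outline: the stratification gives the H\"ormander condition, hence hypoellipticity of $\mathcal{L}$ and $\partial_t-\mathcal{L}$; the scaling $h_t(p)=\lambda^{Q}h_{\lambda^2 t}(\delta_\lambda p)$ is correct; the splitting of $\int_0^\infty h_t(p)\,dt$ at $t_0=\mathcal{N}(p)^2$ does yield $|\Gamma(p)|\le C\mathcal{N}(p)^{2-Q}$ and the exact homogeneity; and the integration-by-parts in $t$ gives exactly \eqref{phi0} with the sign convention of the paper. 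Two caveats are worth making explicit. First, the argument needs $Q>2$ (otherwise the large-time integral diverges); this is harmless here since nonabelian Carnot groups have $Q\ge 4$, and the paper itself imposes $Q>2$ in Theorem~\ref{thm-fund-psn}, but the theorem as stated does not say it. Second, you correctly flag that the whole construction hinges on quantitative off-diagonal decay of $h_1$ (rapid decay, or at least decay of order $>Q-2$ in $\mathcal{N}$), and the remaining ``formal'' steps (Fubini, differentiation under the integral to get $\mathcal{L}\Gamma=0$ off the origin, symmetry $h_t(q)=h_t(q^{-1})$ and conservation of mass used in the approximate-identity limit) all rely on those same kernel estimates; positivity and $\int h_t=1$ come from Markovianity/stochastic completeness rather than hypoellipticity alone. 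With those inputs supplied (as in Folland's paper), your proof is complete and is the same route as the cited source.
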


The relation between the $\opl$-harmonicity and the pseudodistances plays important role in our discussion, see Section~\ref{sect5}. For this reason, we now recall, following Definition 5.4.1 in \cite{blu}, that a homogeneous symmetric form $d$ on $G$ is an \emph{$\opl$-gauge}, if
\begin{equation}
   \opl(d^{2-Q})=0\qquad \hbox{in }G\setminus\{0\}. \label{def-lgauge}
\end{equation}
Using this notion we may observe that pseudonorm, pseudodistances and the fundamental solution are related to each other in the following way.
\begin{thm}[Proposition 5.4.2 and Theorem 5.5.6 in \cite{blu}]\label{thm-fund-psn}
 Let $\Gamma$ be the fundamental solution of a sub-Laplacian $\opl$ defined on a group $G$ of homogeneous dimension $Q>2$. Then,
 \begin{equation}
  \mathcal{N}(p)=
  \begin{cases}
  -\Gamma^{1/(Q-2)}(p)\qquad  &p\in G\setminus\{0\}, \label{eq-N-fund}\\
  0\qquad &p=0.
  \end{cases}
 \end{equation}
  is a pseudonorm and hence, defines a pseudodistance (see Defintion~\ref{defn-pseudod}).

  Furthermore, the opposite relation holds as well: an $\opl$-gauge $d$ defines a fundamental solution $\Gamma=C_d d^{2-Q}$. Therefore, $\opl$-gauge is unique up to a constant.
\end{thm}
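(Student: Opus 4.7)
The plan is to verify the three axioms of Definition~\ref{defn-pseudon} for $\mathcal{N}$ given by \eqref{eq-N-fund}, and then separately produce a fundamental solution from any $\opl$-gauge $d$. The crucial input for the first direction is the scaling law of $\opl$: each left-invariant horizontal field $\tilde X_i$ is $\delta_\lambda$-homogeneous of degree one, so $\opl$ is homogeneous of degree two. Combining this with $\opl \Gamma = \text{Dirac}_0$ and the transformation $\text{Dirac}_0\circ \delta_\lambda = \lambda^{-Q}\,\text{Dirac}_0$ yields $\Gamma(\delta_\lambda p) = \lambda^{2-Q}\Gamma(p)$, which is precisely the homogeneity axiom for $\mathcal{N}$. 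Continuity of $\mathcal{N}$ on all of $G$ follows from $\Gamma \in C^\infty(G\setminus\{0\})$ together with the blow-up $|\Gamma(p)|\to\infty$ as $p\to 0$ that the same homogeneity forces.

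For positivity I would first establish $\Gamma<0$ on $G\setminus\{0\}$: near infinity $\Gamma$ vanishes and is $\opl$-harmonic, while near $0$ the homogeneity of degree $2-Q<0$ combined with continuity of $\Gamma$ on the sphere $\{\mathcal{N}'=1\}$ (for any auxiliary pseudonorm $\mathcal{N}'$) pins down a definite sign; a maximum principle for $\opl$-harmonic functions on $G\setminus\overline{B(0,\eps)}$ then excludes sign changes. Symmetry $\mathcal{N}(p^{-1})=\mathcal{N}(p)$ reduces to $\Gamma(p^{-1})=\Gamma(p)$, which in turn follows from formal self-adjointness of $\opl$ with respect to Haar measure: this gives $\Gamma(q^{-1}p)=\Gamma(p^{-1}q)$ by writing $\Gamma(p,q):=\Gamma(q^{-1}p)$ and using $\Gamma(p,q)=\Gamma(q,p)$, and the claim follows upon specializing $p=0$.

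For the converse, given an $\opl$-gauge $d$, the function $d^{2-Q}$ is smooth and $\opl$-harmonic on $G\setminus\{0\}$ by \eqref{def-lgauge}, and is in $L^1_{\loc}(G)$ by homogeneity. To show it agrees with $\Gamma$ up to a multiplicative constant, I would test the distribution $\opl d^{2-Q}$ against $\phi\in C_0^\infty(G)$:
\begin{equation*}
(\opl d^{2-Q},\phi)=\lim_{\eps\to 0}\int_{G\setminus B(0,\eps)} d^{2-Q}(q)\,\opl\phi(q)\, dq,
\end{equation*}
apply a Green-type identity for $\opl$ on $G\setminus B(0,\eps)$, and use the $\delta_\eps$-scaling of the boundary integrands combined with $|\bd B(0,\eps)|\sim \eps^{Q-1}$ to see that the limit equals $c\,\phi(0)$ with a constant $c$ independent of $\phi$. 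Then $\Gamma:=c^{-1}d^{2-Q}$ satisfies \eqref{phi0}, which identifies $C_d=c^{-1}$. Uniqueness up to a constant follows because two fundamental solutions differ by a globally $\opl$-harmonic function that vanishes at infinity, and the maximum principle forces such a function to be zero; hence if $d_1,d_2$ are both $\opl$-gauges, $d_1^{2-Q}$ and $d_2^{2-Q}$ are proportional, which gives $d_1=C\, d_2$.

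The main obstacle I anticipate is the distributional identification of $\opl d^{2-Q}$ with a nonzero multiple of $\text{Dirac}_0$: computing the boundary contributions produced by the Green identity on $\bd B(0,\eps)$ genuinely needs the sub-Riemannian geometry of $G$ (and the homogeneity of $d$), since no ordinary Euclidean divergence theorem applies verbatim to $\opl$-gradients on level sets of a pseudonorm. All other steps reduce to scaling, self-adjointness, and standard maximum-principle considerations.
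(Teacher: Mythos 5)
The paper does not prove this statement at all: it is imported verbatim from the cited reference (Proposition 5.4.2 and Theorem 5.5.6 in \cite{blu}), so there is no internal proof to compare with; your proposal is in effect a reconstruction of the argument in that reference, and its outline is the standard one (homogeneity of $\Gamma$ from the degree-two homogeneity of $\opl$, sign and non-degeneracy via hypoellipticity and the maximum principle, symmetry from formal self-adjointness of $\opl$ with respect to Haar measure, and the converse via a distributional computation of $\opl d^{2-Q}$ plus a Liouville-type uniqueness statement). Two steps should be made explicit rather than left implicit. First, the scaling argument by itself only shows that $\lambda^{Q-2}\,\Gamma\circ\delta_\lambda$ is \emph{another} fundamental solution vanishing at infinity; to conclude $\Gamma(\delta_\lambda p)=\lambda^{2-Q}\Gamma(p)$ you must already invoke the uniqueness of the fundamental solution (difference is $\opl$-harmonic by hypoellipticity, vanishes at infinity, hence zero by the maximum principle), i.e.\ the same lemma you use later for the converse; as written the homogeneity claim is a non sequitur without it. Second, in the converse you must rule out $c=0$ in $\opl d^{2-Q}=c\,\mathrm{Dirac}_0$: if $c$ were zero, $d^{2-Q}$ would be $\opl$-harmonic as a distribution on all of $G$, hence smooth near $0$ by hypoellipticity, contradicting the blow-up forced by homogeneity of degree $2-Q<0$ and strict positivity of $d$ off the origin. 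With these two points spelled out (and the symmetry step carried out as a genuine two-variable kernel identity via Fubini, since inversion turns left-invariant fields into right-invariant ones), your sketch is a faithful version of the proof in \cite{blu}; the book's argument for the identification $\Gamma=C_d\,d^{2-Q}$ can alternatively be run by comparing the two homogeneous $\opl$-harmonic functions on the unit $\mathcal{N}$-sphere and applying the maximum principle, which avoids the boundary-term computation you flag as the main obstacle.
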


We illustrate the above theorems with an example of the $H$-type groups.
\begin{ex}\label{ex-heis}
 Basing on Example~\ref{H-type}, we recall that a Lie algebra $\mathfrak{g}$ of an $H$-type group decomposes as
 follows: $\mathfrak{g}=\mathfrak{g}_1 \oplus \mathfrak{g}_2$. If $\pi_i :\mathfrak{g} \to \mathfrak{g}_i$, $i=1,2$, are the natural projections, then the \emph{Folland-Kaplan pseudonorm} on the normal model $(\mathfrak{g}, *)$ is given by the following formula:
\begin{equation*}\label{a-dist}
\mathcal{N}(X):=a(X)^{\frac14}:=\left(\langle \pi_1(X),\pi_1(X) \rangle^2 + 16
\langle \pi_2(X), \pi_2(X) \rangle \right)^\frac14,
\end{equation*}
and so the fundamental solution for the $\opl$-Laplace operator takes a form:
$$
-\Gamma(X)=c\mathcal{N}(X)^{2-Q},
$$
where $c=c(Q)>0$.
\end{ex}

Finally, we recall a representation formula for test functions in terms of the sub-Laplacian, see Theorem 5.3.3 in \cite{blu}. Namely, by applying Formula~\eqref{phi0} to a test function $G\ni q \to \phi(pq)$ defined for any given $\phi\in C_0^{\infty}(G)$ and any $p\in G$, gives the following:
$$
 \int_G \Gamma(p^{-1}q) \mathcal{L}\phi(q)dq =-\phi(p),
$$
where $\Gamma$ is a fundamental solution for $\mathcal{L}$ in $G$. A similar formula arises in the studies of the
non-homogeneous sub-Laplace equation $\mathcal{L}u=f$ with $f \in C_0^\infty(G)$. Then, it holds:
$$
 u(p)=f*\Gamma(p)=\int_G f(q) \Gamma(q^{-1}p)dq.
$$
 For further studies about this equation and the associated linear potentials we refer, for instance, to the book of Ricciotti~\cite{Ricciotti}.

\subsection{Mean value property - strongly and weakly harmonic functions}\label{subs-mvp}

In the previous sections we recall and discuss the harmonicity from the point of view of the subelliptic Laplacian $\opl$. Below we introduce a different approach based on the mean value property, the so-called \emph{strongly} and \emph{weakly} harmonic functions. Such functions are defined in the general setting of metric measure spaces, and were introduced in Gaczkowski--G\'orka~\cite{GG} and studied further in \cite{agg}, where several rudimentary properties of such functions are proved. These results include the maximum and comparison principles, various types of Harnack estimates, local H\"older and Lipschitz regularity, the Liouville type theorems and the solvability of the Dirichlet problem based on the dynamical programming method and the Perron method. The interplay between the underlying measure and metric turns out to be crucial in such studies. Furthermore, the flexibility in choosing the distance function and measures sheds new light on harmonic functions.

Following the discussion in Section 3 in \cite{agg}, we recall the key definitions specialized to the setting of Carnot-Carath\'eodory groups. In this section, by $d$, we denote a metric induced by a left-invariant norm $\mathcal{N}$ on a Carnot-Carath\'eodory group $G$, cf. Section~\ref{sect-pseudo}. A ball $B(p, r)$ in $G$ centered at $p\in G$ with radius $r>0$ is defined as follows:
\[
 B(p, r)=\{q\in G: d(p, q)\leq r\}.
\]

\begin{defn}\label{defn-harm}
 Let $\Om\subset G$ be an open set. A locally integrable function $f:\Om\to \R$ is called \emph{(strongly) harmonic} in $\Om$ if the following inequality holds for all balls $B(p, r)\Subset \Om$ with $p\in \Om$ and $r>0$:
 \[
  f(p)=\frac{1}{|B(p,r)|}\int_{B(p,r)} f(q) dq.
  \]
 Here, $dq$ stands for the Lebesgue measure. The set of all harmonic functions in $\Om$ will be denoted $\harm(\Om)$.
\end{defn}

The following definition generalizes the previous one and strengthens motivation for studying harmonic functions defined via the mean value property, see the discussion below.
 \begin{defn}\label{defn-w-harm}
 Let $\Om\subset G$ be an open set. A locally integrable function $f:\Om\to \R$ is called \emph{weakly harmonic} in $\Om$ if for every $p\in \Om$ there exists a non-empty set of positive radii $\{r^p_\alpha\}_{\alpha\in I}$, for some indexing set $I$, such that the following inequality holds for all balls $B(p, r^p_\alpha)\Subset \Om$:
 \[
  f(p)=\frac{1}{|B(p, r^p_\alpha)|}\int_{B(p,r^p_\alpha)} f(q) dq.
  \]
 As in the previous definition, $dq$ denotes the Lebesgue measure. The set of all weakly harmonic functions in $\Om$ will be denoted $\wharm(\Om)$.
\end{defn}

 Let us also remark that the above definitions are robust enough to allow us to study the mean value property not only for various measures and distances but in fact for pseudodistances as presented in Section~\ref{subs-smooth} below. Indeed, it turns out that the $C^{\infty}$-regularity of functions in $\harm$ holds also if $d$ is a pseudodistance.

 Motivation for studying relations between the harmonicity and the mean value property in the Euclidean setting has for two centuries been an ongoing subject of research, and it was Gauss who, perhaps first, showed that harmonic functions have the mean value property. The opposite question, whether all radii of balls contained in an underlying domain are needed for the mean value property to hold has also been investigated by several prominent mathematicians, such as Koebe, Volterra and Kellogg, Hansen and Nadirashvili, Blaschke, Privaloff and Zaremba. Let us also remark, that several results in this direction of studies are known as \emph{one radius (two radii) theorems}. We refer to \cite{agg} for further historical comments and references regarding the mean value property and harmonicity. The mean value property continues to inspire studies in PDE, leading for instances to $p$-harmonious functions and their stochastic tug-of-war games, see \cite{agg}.

\section{Regularity of weakly and strongly harmonic functions and their relations with $\opl$-harmonic functions}\label{sect5}

The main results of the paper are presented in this section. Among other topics, we study regularity properties of weakly and strongly harmonic functions. First we show that the familiar method of difference quotients applies to weakly and strongly harmonic functions and obtain the local Sobolev regularity of such functions, see Theorem~\ref{thm-diff-harm-fun}. We then prove that functions satisfying Definition~\ref{defn-harm} in Carnot groups are smooth (see also Remark~\ref{rem-weak-smooth} commenting the lack of a similar property for weakly harmonic functions). This observation is then used in our studies of relations between the strongly harmonic functions and the canonical sub-Laplacian  $\opl$. Namely, by using Theorem~\ref{thm-mvp-smooth}, we show that strongly harmonic functions defined with respect to the pseudodistance induced by the fundamental solution for $ \opl$ (which in the particular example of the Heisenberg group is a metric) satisfy the $\opl$-harmonic equation, i.e., they are $\opl$-harmonic, see Theorem~\ref{thm-mvp}.

We present both: Sobolev and $C^{\infty}$-regularity properties of strongly harmonic functions. The reason for this is that techniques employed in these studies differ from each other. The Sobolev type regularity is based on the measure theoretic properties of functions in the class $\harm$, while the smoothness result relies on their analytic properties and the structure of Carnot-Carath\'eodory groups, in particular with respect to convolutions. Furthermore, for the smoothness of strongly harmonic functions we may weaken the assumptions on the underlying distance and allow it to be merely a pseudodistance. This observation generalizes definitions presented in \cite{agg} and is possible here as the more general definition of a harmonic function is now compensated by the presence of group and Euclidean structures as in $\Hein$. Therefore, results of Sections~\ref{subs-sob} and \ref{subs-smooth} illustrate the richness of the structure of functions in $\harm$ and provide additional motivation for further investigation. 

\subsection{Local Sobolev regularity}~\label{subs-sob}
Let $Z$ be a vector field in a Carnot group $G$ and $u:\Om\to \R$ be a function from a domain $\Om\subset \Hn$. The \emph{difference quotient} of $u$ at $p\in \Om$ is defined as follows:
 \[
  D_h^Z(p)=\frac{u(pe^{hZ})-u(p)}{h}\,=\, \frac{u(pe^{\delta_h(Z)})-u(p)}{h}, \quad \hbox{ when} \quad Z \in \mathfrak{g}_1 \quad \hbox{ and } h\not=0.
 \]
 We mention that the second quotient above is the Pansu difference quotient and that in general the second equality is false when $Z \not \in \mathfrak{g}_1$. Recall that for any $V \in \mathfrak{g}$, the Pansu derivative is given by
$$ PDf(p)(V)=\lim_{h \to 0} \frac{u(pe^{\delta_h(V)})-u(p)}{h}$$ and is in some sense the natural derivative to consider on Carnot groups. In Lemma 3.6 of Capogna-Cowling \cite{capcow}, it is shown that $Q$-($\mathcal{L}$-harmonic) functions are Pansu differentiable almost everywhere. Note here that $Q$ is the Hausdorff dimension of $G$  and $u$ is $Q$-($\mathcal{L}$-harmonic) if and only if
$$\int_\Omega |\nabla_0 u(x)|^{Q-2}\nabla_0 u(x) \cdot \nabla_0 \phi (x) dx=0$$
for every $\phi \in C_0^\infty(\Omega)$. Note that $\mathcal{L}$-harmonic functions are $ 2$-($\mathcal{L}$-harmonic).

The following result holds in the Carnot group setting similarly to the Euclidean setting, cf. Gilbarg--Trudinger~\cite[Chapter 7.11]{gt}, H\"ormander~\cite{hor}, Manfredi--Mingione~\cite{minm}, Capogna~\cite{cap} and Ricciotti~\cite{Ricciotti}.

\begin{lem}[cf. Lemma 1 in \cite{minm}]\label{lem-diff-quot}
  Let $\Om\subset \Hn$ be a domain and $\Om'\Subset \Om$. Furthermore, let $\tilde Z$ be a left-invariant vector field corresponding to $Z \in \mathfrak{g}_1$ and let $u\in L^s_{loc}(\Om)$ for $s>1$. If there exist constants $\sigma<\dist(\Om', \partial \Om)$ and $C>0$  such that
  \[
   \sup_{0<|h|<\sigma} \int_{\Om'}|D_h^Zu(q)|^s dq\leq C^s,
  \]
  then $\tilde Zu\in L^s(\Om')$ and $\| \tilde Zu\|_{L^s(\Om')}\leq C$. Conversely, if $\tilde Zu\in L^s(\Om')$, then for some $\sigma>0$ it holds that
  \[
   \sup_{0<|h|<\sigma} \int_{\Om'}|D_h^Zu(q)|^s dq\leq \left(2\|\tilde Zu\|_{L^s(\Om')} \right)^s.
  \]
  Moreover, if the first assertion holds, then $D_h^{Z}u$ converges strongly to $\tilde Zu$ in $L^s(\Om')$, as $h\to 0$.
\end{lem}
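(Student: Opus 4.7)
The plan is to follow the classical Nirenberg difference quotient argument from the Euclidean setting (as in Gilbarg--Trudinger Chapter 7.11), adapting each step to exploit the left-invariance of Lebesgue measure on $G$ and the fact that $\tilde Z$ is a left-invariant first order differential operator. The two directions are largely independent, so I would split the proof into three pieces: (i) compactness-plus-duality for the forward direction, (ii) a flow representation for the converse, (iii) uniform convexity (or equivalently, approximation by smooth functions) for the strong convergence.

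For the forward implication, I would first observe that, because $s>1$, the space $L^s(\Om')$ is reflexive. The uniform bound on $\|D_h^Z u\|_{L^s(\Om')}$ yields a sequence $h_k\to 0$ along which $D_{h_k}^Z u \rightharpoonup v$ weakly in $L^s(\Om')$ for some $v$ with $\|v\|_{L^s(\Om')}\le C$. To identify $v$ with $\tilde Z u$, I would test against an arbitrary $\phi\in C_0^\infty(\Om')$ (with $h$ smaller than the distance from $\mathrm{supp}\,\phi$ to $\partial\Om'$) and perform the discrete integration-by-parts
\begin{equation*}
\int_{\Om'} D_h^Z u(q)\,\phi(q)\,dq \;=\; -\int_{\Om'} u(q)\,D_{-h}^Z \phi(q)\,dq,
\end{equation*}
which follows by a change of variables $q\mapsto qe^{hZ}$ combined with the left-invariance of Lebesgue measure on $G$ recalled earlier in the preliminaries. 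Since $\phi$ is smooth and compactly supported, $D_{-h}^Z\phi \to -\tilde Z\phi$ uniformly as $h\to 0$, so passing to the limit along $h_k$ produces $\int v\phi = -\int u\,\tilde Z\phi$, which is exactly the definition of $\tilde Zu = v$ in the distributional sense. Because the weak limit is unique and lies in $L^s$, the whole family (not just a subsequence) converges weakly, and the lower semicontinuity of the norm gives $\|\tilde Zu\|_{L^s(\Om')}\le C$.

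For the converse, assume first that $u$ is smooth; the general case follows by a standard mollification using the group convolutions from Section~\ref{sect-conv}. The left-invariance of $\tilde Z$ gives the one-parameter flow identity
\begin{equation*}
u(pe^{hZ})-u(p)\;=\;\int_0^h \tilde Zu(pe^{tZ})\,dt,\qquad\text{hence}\qquad D_h^Z u(p)\;=\;\frac{1}{h}\int_0^h \tilde Zu(pe^{tZ})\,dt.
\end{equation*}
Applying Jensen's inequality (or Minkowski's integral inequality) in the parameter $t$, then integrating in $p$ over $\Om'$, and using once again the left-invariance of Lebesgue measure to translate the integration variable by $e^{tZ}$, one obtains $\|D_h^Z u\|_{L^s(\Om')} \le \|\tilde Z u\|_{L^s(\Om'_\sigma)}$, where $\Om'_\sigma$ is a slightly enlarged neighbourhood of $\Om'$ still contained in $\Om$ provided $|h|<\sigma$ is small enough. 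The factor $2$ in the stated bound absorbs the passage from the smooth approximants to $u$.

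For the last assertion (strong convergence), I would combine the weak convergence $D_h^Z u\rightharpoonup \tilde Zu$ established in step (i) with norm convergence: approximating $u$ in $\HW$ by smooth functions $u_\varepsilon$ (again via horizontal convolutions) one has $D_h^Z u_\varepsilon \to \tilde Z u_\varepsilon$ strongly in $L^s(\Om')$ by dominated convergence, and the uniform $L^s$ bound from the converse direction allows one to interchange the limits $h\to 0$ and $\varepsilon\to 0$, yielding $D_h^Z u\to \tilde Zu$ in $L^s(\Om')$. Alternatively, one can invoke the uniform convexity of $L^s$ ($1<s<\infty$): the weak convergence together with $\|D_h^Z u\|_{L^s}\to\|\tilde Zu\|_{L^s}$, which follows from the two-sided bound, upgrades automatically to strong convergence. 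The main technical point—and the only place where one must be slightly careful compared to the Euclidean case—is the change-of-variables identity used in the integration by parts and in the Jensen-type estimate; both require nothing more than the fact, recalled in Section~\ref{sec-prel}, that the Lebesgue measure on the normal model is a bi-invariant Haar measure.
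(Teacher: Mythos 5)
Your proposal is correct and follows essentially the same route the paper relies on: the paper does not prove the lemma itself but defers to Manfredi--Mingione and Ricciotti, whose arguments are exactly this Nirenberg-type scheme (reflexivity and weak compactness plus discrete integration by parts using bi-invariance of the Haar/Lebesgue measure, the flow representation with Minkowski's inequality and mollification for the converse, and a density/uniform-convexity argument for the strong convergence). Only a cosmetic remark: in the forward step the correct limit is $D_{-h}^Z\phi\to+\tilde Z\phi$ (not $-\tilde Z\phi$), which is precisely what turns the identity $\int_{\Om'}D_h^Zu\,\phi=-\int_{\Om'}u\,D_{-h}^Z\phi$ into the stated conclusion $\int_{\Om'}v\,\phi=-\int_{\Om'}u\,\tilde Z\phi$.
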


For another proof of the lemma, we refer to \cite[Theorem 2.11]{Ricciotti}, where it is stated and proved for the Heisenberg group. However, one easily observes that the proof holds for any Carnot group as well. We use the lemma to show the following result on the Sobolev regularity of strongly harmonic functions.

 \begin{thm}\label{thm-diff-harm-fun}
 Let $\Om\subset G$ be a domain and let $\Om'\Subset \Om$ with $\dist(\Om', \bd \Om)>0$. Suppose that $u$ is a harmonic function in $\harm(\Om, \mu)$, where $\mu$ stands for the Lebesgue measure on $G$ and the underlying metric $d$ is such that $G$ is a geodesic space in $d$. Let $X \in \mathfrak{g}_1$, then $\tilde Xu \in L^s(\Om')$ for all $s>1$ and $u\in HW^{1,s}(\Om')$.

 In particular, if $G=\Hein$, and $\tilde X_i$ for $i=1,2,\ldots, 2n$ are the left-invariant horizontal vector fields, then $\tilde X_iu \in L^s(\Om')$ for all $s>1$ (see also the discussion following Definition~\ref{defn-basis} and Example~\ref{ex-heis}).

The assertion holds as well if $u$ is weakly harmonic function in $\wharm(\Om, \mu)$, provided that for any $p\in \Om'$ the mean value property holds for $u$ with respect to ball $B(p, \frac12\dist(\Om',\bd \Om))$.
 \end{thm}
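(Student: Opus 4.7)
The strategy is to invoke Lemma~\ref{lem-diff-quot}: for each horizontal direction $X\in\mathfrak{g}_1$ and each $s>1$, I will produce a bound on $\|D_h^X u\|_{L^s(\Om')}$ that is uniform for all small $|h|$; the lemma then delivers $\tilde X u\in L^s(\Om')$, and iterating over an orthonormal basis of $\mathfrak{g}_1$ yields $u\in HW^{1,s}(\Om')$. The Heisenberg statement reduces to this by specialising to the standard basis $\{X_1,\dots,X_{2n}\}$.

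Fix $r\in(0,\dist(\Om',\bd\Om))$ so small that $B(pe^{hX},r)\Subset\Om$ for every $p\in\Om'$ and every $|h|<\sigma$, with $\sigma$ chosen sufficiently small. Invoking the mean value property at both $p$ and $pe^{hX}$, and using that the Lebesgue measure is left invariant so $|B(pe^{hX},r)|=|B(p,r)|=|B(0,r)|$, one has
\[
u(pe^{hX})-u(p)=\frac{1}{|B(0,r)|}\int_{G}u(q)\bigl(\mathbf{1}_{B(pe^{hX},r)}(q)-\mathbf{1}_{B(p,r)}(q)\bigr)\,dq,
\]
and the integrand is supported on the symmetric difference $A_{p,h}:=B(pe^{hX},r)\triangle B(p,r)$. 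Since strongly harmonic functions on geodesic Ahlfors-regular spaces are locally H\"older continuous by \cite[Thms.~4.1--4.2]{agg}, we may set $M:=\|u\|_{L^\infty(\Om_1)}<\infty$ for some open set $\Om_1$ with $\Om'\Subset\Om_1\Subset\Om$ containing every $pe^{hX}$, $p\in\Om'$, $|h|<\sigma$, whence
\[
|D_h^X u(p)|\le\frac{M\,|A_{p,h}|}{|h|\cdot|B(0,r)|}.
\]

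It remains to estimate $|A_{p,h}|$. Because $X\in\mathfrak{g}_1$, the curve $t\mapsto pe^{tX}$ is horizontal, so $d_s(p,pe^{hX})\le|h|\,|X|$; by Remark~\ref{rem-psn} the ambient metric satisfies $d(p,pe^{hX})\le c|h|$. Since $d$ is genuinely a metric (because $G$ is geodesic in $d$), the triangle inequality puts $A_{p,h}$ inside the annulus $\{q:|d(p,q)-r|\le c|h|\}$. The identity $|B(p,r)|=r^Q|B(0,1)|$ makes the Lebesgue measure $Q$-Ahlfors regular, and combined with the geodesic hypothesis this implies the annular decay estimate
\[
|B(p,r+\eps)\setminus B(p,r-\eps)|\le C_0\eps\,r^{Q-1},\qquad 0<\eps<r/2.
\]
Hence $|A_{p,h}|\le C_1|h|\,r^{Q-1}$, and substitution yields the $h$-independent pointwise bound $|D_h^X u(p)|\le C_2 M/r$ on $\Om'$. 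Integration then gives the hypothesis $\|D_h^X u\|_{L^s(\Om')}\le C_2 M r^{-1}|\Om'|^{1/s}$ of Lemma~\ref{lem-diff-quot} for every $s>1$, and the conclusion follows.

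The main difficulty is the symmetric-difference estimate: one has to marry the \emph{horizontal} nature of the shift $p\mapsto pe^{hX}$ (which is what makes $d(p,pe^{hX})\lesssim|h|$ rather than just $|h|^{1/i}$ for $X\in\mathfrak{g}_i$) with the annular decay of $d$-balls (which is precisely what the geodesic hypothesis buys). Dropping either ingredient breaks the linear-in-$|h|$ control of $|A_{p,h}|$, and hence the uniform bound on the difference quotient. The weakly harmonic statement is handled by exactly the same chain of inequalities with the prescribed radius $r=\tfrac12\dist(\Om',\bd\Om)$, after shrinking $\Om'$ slightly so that $pe^{hX}$ still lies in a set at which the mean value identity at that radius is assumed to hold.
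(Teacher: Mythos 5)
Your proposal is correct and follows essentially the same route as the paper: invoke local boundedness of $u$ from the results of \cite{agg}, use the mean value property at $p$ and $pe^{hX}$ together with left invariance of Lebesgue measure to reduce the difference to an integral over the symmetric difference of the two balls, trap that symmetric difference in an annulus of width $O(|h|)$ (using that the shift is horizontal, so $d(p,pe^{hX})\lesssim|h|$), estimate its measure linearly in $|h|$ via the exact scaling $|B(p,r)|=r^{Q}|B(0,1)|$, and conclude with Lemma~\ref{lem-diff-quot}; the weakly harmonic case is handled by fixing $r=\tfrac12\dist(\Om',\bd\Om)$ exactly as in the paper. The only cosmetic difference is that the annular estimate follows directly from the homogeneity of the ball volumes (no geodesic hypothesis needed for that particular step, which the paper obtains via the mean value theorem applied to $t^{Q}$), while the geodesic assumption is what supplies the local boundedness input.
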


 In the proof below we appeal to a definition and some results from the general theory of strongly harmonic functions on metric measure spaces, developed in \cite{agg}. For the readers convenience we state them now.

 Let $(X, d, \mu)$ be a metric measure space. We say that a measure $\mu$ \emph{is continuous with respect to a metric $d$} if for all $x\in X$ and all $r>0$ it holds that
 \begin{equation}\label{m-cont-m}
  \lim_{\Om \ni y\underset{d}\to x} \mu(B(x,r) \Delta B(y,r))=0,
 \end{equation}
 where $\Delta$ denotes the symmetric difference of two sets.

\begin{prop}\label{prop-dbl}
 (a) Let $(X, d, \mu)$ be a geodesic doubling metric measure space. Then $\mu$ is continuous with respect to the metric $d$ (see Proposition 2.1, \cite{agg}).\\
 (b) Let $(X, d, \mu)$ be a metric measure space with measure $\mu$ continuous with respect to the metric $d$. If $f\in \harm(\Om, \mu)$, then $f$ is locally bounded in $\Om$. (Corollary 4.1 in \cite{agg}).
\end{prop}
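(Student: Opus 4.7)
The proposition has two logically independent parts, and I would treat them separately. For part (a), the starting point is the triangle inequality: if $\epsilon := d(x,y) < r$, then a point $z$ in $B(y,r) \setminus B(x,r)$ satisfies $d(z,x) \leq d(z,y) + d(y,x) \leq r + \epsilon$ while $d(z,x) > r$, and symmetrically for $z \in B(x,r)\setminus B(y,r)$. This yields the inclusion
\[
B(x, r) \,\Delta\, B(y, r) \,\subset\, B(x, r+\epsilon) \setminus B(x, r-\epsilon),
\]
which reduces the problem to showing that the annular measure $\mu(B(x, r+\epsilon) \setminus B(x, r-\epsilon))$ tends to zero as $\epsilon \to 0^+$.

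The key technical step in (a) is the \emph{annular decay property} for doubling geodesic spaces, namely the existence of exponents $\alpha, C > 0$ (depending only on the doubling constant) such that
\[
\mu\bigl(B(x, r+\epsilon) \setminus B(x, r-\epsilon)\bigr) \,\leq\, C\,(\epsilon/r)^{\alpha}\, \mu(B(x, r)).
\]
A direct monotone-convergence argument applied to $\phi(s) := \mu(B(x,s))$ only gives continuity of $\phi$ at all but countably many $s$ (those where the sphere has positive mass), so it is insufficient for the conclusion, which must hold for \emph{every} $r$. The geodesic hypothesis is essential: along a minimizing geodesic from $x$ to any point on the sphere $\{d(\cdot, x) = r\}$ one can locate a ball of radius comparable to $r$ lying inside $B(x,r)$ and disjoint from a thin outer annulus; doubling then forces that annulus to be a controlled fraction of $\mu(B(x,r))$, and iterating gives the power-law bound. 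This is the classical argument of Buckley and Colding--Minicozzi.

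For part (b), fix $p_0 \in \Om$ and choose $R > 0$ with $\overline{B(p_0, 2R)} \subset \Om$. For every $q \in B(p_0, R)$ the ball $B(q, R)$ lies in $B(p_0, 2R) \Subset \Om$, so Definition~\ref{defn-harm} gives
\[
|f(q)| \,\leq\, \frac{1}{\mu(B(q, R))} \int_{B(q, R)} |f|\, d\mu \,\leq\, \frac{\|f\|_{L^1(B(p_0, 2R))}}{\mu(B(q, R))}.
\]
The numerator is finite because $f \in L^1_{\loc}(\Om)$. For the denominator, continuity of $\mu$ with respect to $d$ applied at $x = p_0$, $r = R$ gives
\[
|\mu(B(q, R)) - \mu(B(p_0, R))| \,\leq\, \mu\bigl(B(q, R) \,\Delta\, B(p_0, R)\bigr) \,\longrightarrow\, 0 \quad \text{as } q \to p_0,
\]
so $q \mapsto \mu(B(q,R))$ is continuous at $p_0$. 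Since $\mu(B(p_0, R)) > 0$, there is a neighborhood $U \subset B(p_0, R)$ of $p_0$ on which $\mu(B(q, R)) \geq \tfrac{1}{2}\mu(B(p_0, R))$, yielding a uniform bound $|f(q)| \leq 2\mu(B(p_0, R))^{-1}\|f\|_{L^1(B(p_0, 2R))}$ on $U$. As $p_0$ was arbitrary, $f$ is locally bounded.

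The main obstacle is (a): the reduction via symmetric difference is essentially free, but converting that into vanishing of measure requires the annular decay bound, which itself uses both doubling and geodesic hypotheses in a nontrivial way. Once (a) is in hand, part (b) is a short calculation combining the mean value equality, local $L^1$ integrability, and the continuity of $q \mapsto \mu(B(q,R))$ supplied by (a)'s conclusion.
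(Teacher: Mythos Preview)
Your argument is correct in both parts, but note that the paper does not actually prove this proposition: it is stated as a recall of results from the cited reference \cite{agg} (Proposition~2.1 and Corollary~4.1 there), with no proof given in the present text. So there is no ``paper's own proof'' to compare against beyond the bare citation.

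That said, your sketch matches the standard arguments one would expect to find in \cite{agg}. For part~(a), the reduction $B(x,r)\,\Delta\,B(y,r)\subset B(x,r+\epsilon)\setminus B(x,r-\epsilon)$ is exactly right, and invoking the annular decay property (Buckley, Colding--Minicozzi) is the natural and essentially only way to conclude, since---as you correctly observe---mere monotonicity of $s\mapsto\mu(B(x,s))$ does not rule out atoms on spheres. For part~(b), your chain
\[
|f(q)|\leq \frac{1}{\mu(B(q,R))}\int_{B(q,R)}|f|\,d\mu\leq \frac{\|f\|_{L^1(B(p_0,2R))}}{\mu(B(q,R))}
\]
together with the lower bound on $\mu(B(q,R))$ coming from metric continuity of $\mu$ is exactly the intended mechanism; this is also how the present paper uses Proposition~\ref{prop-dbl}(b) implicitly in the proof of Theorem~\ref{thm-diff-harm-fun}.

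One minor remark: in part~(b) you only need continuity of $q\mapsto\mu(B(q,R))$ at the single point $p_0$, and your derivation of this from \eqref{m-cont-m} is clean. You could alternatively cover a compact $K\Subset\Om$ by finitely many such neighborhoods to get a uniform bound on $K$, but local boundedness in the pointwise-neighborhood sense you establish is already what is required downstream.
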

 \begin{proof}[Proof of Theorem~\ref{thm-diff-harm-fun}]
 % \komT{Below I use the sub-Riemannian balls centered at $x_0$: $\|x_0^{-1}x\|_{\Hn}\leq r$. Ok?}

  The proof appeals to the method presented, for instance, in the proof of Proposition 4.1 in \cite{agg}. Let us first observe that since $(G, d, \mu)$ is, by its properties, a geodesic doubling space, then by Proposition~\ref{prop-dbl}(a), $\mu$ is continuous with respect to the sub-Riemmanian metric and therefore Proposition~\ref{prop-dbl}(b) implies the local boundedness of $u$ in $\Om$.

  Let $h\in \R$ be such that $|h|<\dist(\Om', \bd \Om)$. Note that since $X \in \mathfrak{g}_1$ we have
  \[
   d(p, pe^{hX})= d(p,e^{\delta_{h}(X)})=|h|\mathcal{N}(X),
   \]
 where the norm $\mathcal{N}$ is such that $d(p,q)=\mathcal{N}(p^{-1}q)$ for all $p,q\in G$.  The invariance of the measure implies that for any $X \in \mathfrak{g}_1$, and any $r>0$ such that $B(p, r)\Subset \Om'$ and any $|h|<r/\mathcal{N}(X)$, we have
%We choose  $R=\frac12\distH(\Om',\bd \Om)$ and  so that all the balls mentioned below are subsets of %$\Omega$.
%  Then for any base vector field $X_i$ as in \eqref{base-vect} and $i=1,2,\ldots,2n$ we have that for $r>0$ it holds:
  \begin{align}
  |u(pe^{hX})-u(p)|&=\left|\frac{1}{|B(pe^{hX}, r)|}\int_{B(pe^{hX},r)} u(q) dq-\frac{1}{|B(p, r)|}\int_{B(p,r)} u(q) dq\right| \nonumber \\
  &=\Bigg|\frac{1}{|B(pe^{hX}, r)|}\int_{B(pe^{hX}, r)} u(q) dq-\frac{1}{|B(pe^{hX}, r)|}\int_{B(p, r)} u(q) dq \nonumber  \\
  &-\frac{|B(pe^{hX}, r)|-|B(p, r)|}{|B(pe^{hX}, r)|\,|B(p, r)|}\int_{B(p, r)} u(q) dq\Bigg|\nonumber  \\
  &\leq \frac{1}{|B(pe^{hX}, r)|} \int_{B(pe^{hX}, r) \vartriangle B(p, r)} |u(q)| dq
  + \frac{\left||B(pe^{hX}, r)|-|B(p, r)|\right|}{|B(pe^{hX}, r)|\,|B(p, r)|}\|u\|_{L^1(B(p, r))} \nonumber \\
  &\leq \frac{|B(pe^{hX}, r) \vartriangle B(p, r)|}{|B(pe^{hX}, r)|}\|u\|_{L^\infty(\Om')}
  + \frac{\left||B(pe^{hX}, r)|-|B(p, r)|\right|}{|B(pe^{hX}, r)|\,|B(p, r)|}\|u\|_{L^1(\Om')}. \nonumber
  % \label{harm-cont-est}
%  %\left|\int_{B(y,r)} f(z) d\mu(z)\right|.
 \end{align}
 In the last step we use the local boundedness, see Proposition~\ref{prop-dbl}(b), and the local integrability of $u$, see Definition~\ref{defn-harm}. Hence,
 \begin{align*}
  &\int_{\Om'} \left|\frac{u(pe^{hX})-u(p)}{h}\right|^s\,dq \\
  &\leq 2^s|\Om'|\left(\|u\|^s_{L^\infty(\Om')}+\|u\|^s_{L^1(\Om')}\right)
  \left(\left|\frac{|B(pe^{hX}, r) \vartriangle B(p,r)|}{h|B(pe^{hX}, r)|}\right|^s+
  \left|\frac{\left||B(pe^{hX}, r)|-|B(p, r)|\right|}{h|B(pe^{hX}, r)|\, |B(p, r)|}\right|^s
  \right).
 \end{align*}

  Note that the left-invariance of the Lebesgue measure $dq$ implies that the second term above vanishes.
  Moreover, since $d(p, pe^{hX})=|h|\mathcal{N}(X)$, then for sufficiently small $|h|$ we have:
  \[
   \left|B(pe^{hX}, r) \vartriangle B(p,r)\right|\leq \left|B(pe^{hX}, r+|h|\mathcal{N}(X))\setminus B(p,r-|h|\mathcal{N}(X))\right|.
  \]
  In a consequence, we obtain
  \begin{align*}
   \left|\frac{|B(pe^{hX}, r) \vartriangle B(p,r)|}{h|B(pe^{hX}, r)|}\right|  &=\left|\frac{(r+|h|\mathcal{N}(X))^Q-(r-|h|\mathcal{N}(X))^Q}{hr^Q}\right|
 % &= 2Qs^{Q-1}\left|\frac{|h|}{hr}\right|\leq 2Q\left(1+\frac{|h|}{r}\right)^{Q-1}\frac{1}{r} \\
  \leq 2Q\frac{(r+|h|)^{Q-1}}{r^Q}\leq 2Q\frac{3^{Q-1}}{r},
  \end{align*}
 where in the second estimate we use the mean value theorem applied to function $t^Q$, for $t\in R_+$ on the interval $[r-|h|\mathcal{N}(X),r+|h|\mathcal{N}(X)]$. We choose radii of balls above such that $r=\frac12\dist(\Om',\bd \Om)$. Therefore,
 for sufficiently small $|h|$, we have the following estimate:
 \begin{align*}
 \int_{\Om'} \left|\frac{u(pe^{hX})-u(p)}{h}\right|^s\,dq
 &\leq 4^sQ^s|\Om'|\left(\|u\|^s_{L^\infty(\Om')}+\|u\|^s_{L^1(\Om')}\right) \left(\frac{(r+|h|)^{Q-1}}{r^Q}\right)^s \nonumber \\
 &\leq \left(2Q3^{Q-1}\right)^s\frac{(1+|\Om'|^s)|\Om'|\,\|u\|^s_{L^\infty(\Om')}}{\dist^s(\Om', \bd \Om)}.
  \end{align*}
 Lemma~\ref{lem-diff-quot} implies that $D_h^{X}u\to \tilde X(u)$ in $L^s(\Om')$.

 In particular, if $G=\Hein$, then the same convergence holds for $X_i$ with $i=1,2,\ldots, 2n$. The proof is completed. %{\color{red} have we used a metric, is ok for a pseudometric}

If $u$ is weakly harmonic in $\Om$, then all the above estimates hold provided that at the beginning of the proof we set $r=\frac12\dist(\Om',\bd \Om)$.
 \end{proof}

\subsection{Strongly harmonic functions on Carnot-Carath\'eodory groups are smooth}\label{subs-smooth}

The purpose of the following discussion is to show that the geometric definition of strongly harmonic functions, cf. Definition~\ref{defn-harm}, implies in the setting of Carnot--Carath\'eodory groups that such functions are $C^{\infty}$ smooth. In fact, we can prove this assertion for a wider class of functions, defined with respect to pseudodistances (i.e. quasimetrics) instead of metrics. Namely, suppose that $G$ is equipped with a pseudonorm $\mathcal{N}$ defining a pseudodistance $d$, cf. Definitions~\ref{defn-pseudon} and~\ref{defn-pseudod}. Furthermore, let $\Om\subset G$ be a domain and let $B(p, r)\Subset \Om$ denote any ball defined with respect to $d$. We extend Definition~\ref{defn-harm} and call a function $u:\Om\to R$ \emph{strongly harmonic} if, under the above assumptions, $u$ satisfies Definition~\ref{defn-harm}:

\begin{equation}\label{defn-subs-smooth}
  u(p)=\frac{1}{|B(p, r)|}\int_{B(p,r)} f(q) dq,\quad \hbox{ for all } p\in G.
\end{equation}

It turns out that strong harmonicity with respect to pseudoballs implies smoothness, see Theorem~\ref{thm-mvp-smooth}. This illustrates that our notion of strong harmonicity is robust and allows some degree of flexibility for the distance. Furthermore, we show that for the distance $d$ defining the fundamental solution of the $\opl$-harmonic operator on $G$, strongly harmonic functions are a subset of $\opl$-harmonic functions, see Theorem~\ref{thm-mvp}.

Before proving Theorem~\ref{thm-mvp-smooth} we need to recall the following auxiliary lemma on the integration in the polar coordinates in $G$ and bump functions.

\begin{lem}[cf. Proposition 1.25 in Folland--Stein~\cite{FolStn}]\label{polcrds}
Let $G$ be a Carnot group of the Hausdorff dimension $Q$. For any homogeneous norm $\mathcal{N}$ on $G$, there exists a unique Radon measure $d\sigma$ on the unit sphere $S(0,1) = \{ q\in G : \mathcal{N}(q) = 1\}$  giving the following polar coordinate expression of the integral:
\begin{align} \int_{G} u(q) \, d q  & = \int_0^\infty \int_{S(0,1)}  u( \delta_r(v) ) \, r^{Q-1}  \, d \sigma(v)\, dr. \label{polarcoordCarn}
\end{align}
\end{lem}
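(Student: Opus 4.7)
The plan is to exploit the dilation structure together with the fact that the Lebesgue measure on $G$ scales homogeneously: since $\delta_\lambda \in \mathrm{aut}(G)$ and $\delta_\lambda$ acts on each stratum $\mathfrak{g}_i$ by $\lambda^i$, the Jacobian determinant of $\delta_\lambda$ equals $\lambda^Q$ with $Q=\sum_i i\,\dim \mathfrak{g}_i$, and so $|\delta_\lambda(E)|=\lambda^Q|E|$ for every Borel set $E\subset G$. I would first observe that the map
\[
\Phi:(0,\infty)\times S(0,1)\longrightarrow G\setminus\{0\},\qquad \Phi(r,v)=\delta_r(v),
\]
is a homeomorphism, with inverse $q\mapsto(\mathcal{N}(q),\delta_{1/\mathcal{N}(q)}(q))$; this uses only the continuity and homogeneity of $\mathcal{N}$ (Definition \ref{defn-pseudon}).

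Next I would \emph{define} the candidate measure $\sigma$ by a cone construction. For a Borel set $E\subset S(0,1)$, set
\[
\Omega_E=\{\delta_r(v):0<r\leq 1,\ v\in E\}\subset G,
\]
and put $\sigma(E):=Q\,|\Omega_E|$, where $|\cdot|$ is Lebesgue measure. It is immediate that $\sigma$ is a Borel measure on $S(0,1)$, finite on $S(0,1)$ (as $\Omega_{S(0,1)}$ is the bounded set $\{\mathcal{N}\leq 1\}$) and hence Radon.

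The key computation is the scaling identity: for any $0<a<b$ and Borel $E\subset S(0,1)$,
\[
\bigl|\{\delta_r(v):a<r\leq b,\ v\in E\}\bigr|=|\delta_b(\Omega_E)\setminus\delta_a(\Omega_E)|=(b^Q-a^Q)|\Omega_E|=\int_a^b r^{Q-1}\,dr\cdot \sigma(E).
\]
In other words, the formula \eqref{polarcoordCarn} holds with this $\sigma$ for the characteristic function $u=\chi_{\Phi((a,b]\times E)}$. Since such "polar rectangles" generate the Borel $\sigma$-algebra of $G\setminus\{0\}$ (and $\{0\}$ is Lebesgue-null), a standard monotone-class / $\pi$-$\lambda$ argument followed by approximation by simple functions and monotone convergence extends the identity to all non-negative Borel $u$, and then by decomposition into positive and negative parts to all $u\in L^1(G)$.

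Uniqueness is the cleanest step: if $\sigma'$ is any Radon measure on $S(0,1)$ for which \eqref{polarcoordCarn} holds, then plugging $u=\chi_{\Omega_E}$ into the right-hand side gives
\[
|\Omega_E|=\int_0^1\!\!\int_{S(0,1)}\chi_E(v)\,r^{Q-1}\,d\sigma'(v)\,dr=\frac{\sigma'(E)}{Q},
\]
so $\sigma'=Q|\Omega_{\,\cdot\,}|=\sigma$. The main technical point I expect to require care is the scaling identity in the display above, specifically verifying that $\Phi((a,b]\times E)=\delta_b(\Omega_E)\setminus\delta_a(\Omega_E)$ (which uses that $\delta_r$ is a bijection and commutes with the radial decomposition) and then invoking $|\delta_\lambda(E)|=\lambda^Q|E|$; everything else is routine measure theory.
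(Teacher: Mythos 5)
Your proof is correct. Note that the paper does not prove this lemma at all: it is quoted with a reference to Proposition 1.25 of Folland--Stein, and the argument given there is essentially the one you reconstruct (building $\sigma$ from the dilation-homogeneity $|\delta_\lambda(E)|=\lambda^Q|E|$ of Haar--Lebesgue measure and the unique polar decomposition $q=\delta_{\mathcal{N}(q)}\bigl(\delta_{1/\mathcal{N}(q)}(q)\bigr)$), so your cone-measure construction $\sigma(E)=Q\,|\Omega_E|$ is the standard route rather than a new one. Two small points worth making explicit if this were written out in full: $\Omega_E=\Phi\bigl((0,1]\times E\bigr)$ is Borel because $\Phi$ is a homeomorphism (so images of Borel sets are preimages under the continuous inverse), which also gives the countable additivity of $\sigma$ via injectivity of $\Phi$; and in the $\pi$--$\lambda$ step one should note that both sides are $\sigma$-finite measures on $G\setminus\{0\}$ agreeing on the $\pi$-system of polar rectangles, which contains the exhausting sequence $\Phi\bigl((1/n,n]\times S(0,1)\bigr)$ of sets of finite measure, so the uniqueness theorem for measures applies; the extension to nonnegative Borel $u$ and then to $u\in L^1$ is then routine, as you say.
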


Next, for smoothing purposes we need some bump functions. If $\mathcal{N}$ is any fixed norm on $G$ and $k \in \mathbb{N}$, then the function $\psi: G\to \R$ defined by
\begin{align}
\psi(q)=
\begin{cases}
Ce^{-\frac{1}{1-\mathcal{N}(q)^k}}, & \mathcal{N}(q)<1\\
0, & \mathcal{N}(q) \geq 1,
\end{cases} \label{molif1}
\end{align}

where $C^{-1}= \int_{G} \psi(q)dq$ , has the following properties:
\begin{itemize}
	\item [(i)] $\psi$ is continuous and compactly supported in $B(0,1)$,
	\item [(ii)] $\psi$ is $C^\infty$ on  $G\setminus \{0\}$,
	\item [(iii)] $\psi$ is constant on all spheres centered at $0$,
	\item [(iv)] $\psi$ is $C_0^\infty$, if $k$ can be chosen so that $\mathcal{N}^k$ is $C^\infty$ at $0$.
\end{itemize}
Note that item (iv) is fulfilled by the Folland-Kaplan pseudonorm with $k=4$ and the pseudonorms defined at \eqref{canonN} with $k=2s!$. 

Alternatively, we can also use the bump function given by:
\begin{equation}\label{eq-psi}
\psi=\frac{1}{|B(0,1)|} \chi_{B(0,1)}.
\end{equation}
In this case we observe that
\begin{equation}\label{eq-conv}
u * \psi_\eps= \vint_{B(p,\eps)} u(q) dq.
\end{equation}
%define $\psi_{\ep}:= \ep^{-Q}\psi(\delta_{1/\eps})$ and
 Note that both $\psi$ defined in \eqref{molif1} and \eqref{eq-psi}  are constant on spheres, i.e., $\psi \circ \delta_r |_{S(0,1)}$ is constant.

\begin{theorem}\label{thm-mvp-smooth}
Let $\mathcal{N}$ be any pseudo-norm such that \eqref{molif1} is a $C_0^\infty$ bump for an appropriate choice of $k \in \mathbb{N}$. If $u\in L^1_{\loc}(G)$, then
	\begin{align}
	u(p)&=\vint_{B(p,R)} u(q) dq,\quad \hbox{ for all } p\in G \label{mvp2}
	\end{align}
	is equivalent to
	\begin{align}
	u(p) \int_{S(0,1)} d \sigma (v) &= \int_{S(0,1)}  u(p \delta_{r}( v) )  d \sigma(v),\quad \hbox{ for all } p\in G,  \label{mvp1}
	\end{align}
	where $B(p,R)$ and $S(p,R)$ are the balls and spheres, respectively, determined by $\mathcal{N}$. Furthermore, either of \eqref{mvp2} or \eqref{mvp1} implies that $u\in C^\infty(G)$.
\end{theorem}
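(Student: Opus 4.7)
The plan is to extract both mean value properties from one polar-coordinate identity. Set $F(p,r):=\int_{S(0,1)}u(p\delta_r(v))\,d\sigma(v)$. Left-invariance of the Lebesgue/Haar measure gives $\int_{B(p,R)}u(q)\,dq=\int_{B(0,R)}u(pw)\,dw$ via the substitution $w=p^{-1}q$, and Lemma~\ref{polcrds} applied to the inner variable then yields
\begin{equation*}
\int_{B(p,R)}u(q)\,dq=\int_0^R r^{Q-1}F(p,r)\,dr.
\end{equation*}
Since $|B(p,R)|=R^Q|B(0,1)|$ and $|B(0,1)|=\sigma(S(0,1))/Q$ (by applying the same polar identity to the constant function $1$), condition \eqref{mvp2} becomes $u(p)\sigma(S(0,1))R^Q/Q=\int_0^R r^{Q-1}F(p,r)\,dr$ for every $R>0$, whereas \eqref{mvp1} is precisely $F(p,R)=u(p)\sigma(S(0,1))$. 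Hence \eqref{mvp1}$\Rightarrow$\eqref{mvp2} follows at once by integration in $R$, and for the converse I would differentiate both sides in $R$; the right-hand side is absolutely continuous in $R$ with Lebesgue derivative $R^{Q-1}F(p,R)$ since $r\mapsto r^{Q-1}F(p,r)$ is locally integrable. This yields $F(p,R)=u(p)\sigma(S(0,1))$ for almost every $R>0$.

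For the smoothness assertion the plan is to convolve $u$ against the bump $\psi_\eps$ built from \eqref{molif1}, which by hypothesis on $\mathcal{N}$ is $C_0^\infty(G)$; moreover $\psi_\eps$ is symmetric, since $\psi$ depends only on $\mathcal{N}$ and $\mathcal{N}$ is symmetric. Using symmetry of $\psi_\eps$ together with the invariance of the Lebesgue measure under inversion (Carnot groups are nilpotent, hence unimodular), the plan is to rewrite
\begin{equation*}
(u*\psi_\eps)(p)=\int_G u(pw^{-1})\psi_\eps(w)\,dw=\int_G u(pw^{-1})\psi_\eps(w^{-1})\,dw=\int_G u(pw)\psi_\eps(w)\,dw.
\end{equation*}
Applying polar coordinates in $w$ and writing $\psi_\eps(\delta_s(v))=\eps^{-Q}h(s/\eps)$ for the radial profile $h$ of $\psi$ converts this into
\begin{equation*}
(u*\psi_\eps)(p)=\int_0^\infty \eps^{-Q}h(s/\eps)\,s^{Q-1}F(p,s)\,ds.
\end{equation*}
Substituting $F(p,s)=u(p)\sigma(S(0,1))$ (valid a.e.\ in $s$ under either hypothesis by the first paragraph) and using $\int_G\psi_\eps=\int_G\psi=1$, which holds by the normalization of $C$ in \eqref{molif1} together with convolution property~(5) of Section~\ref{sect-conv}, I obtain $(u*\psi_\eps)(p)=u(p)$ for every $p\in G$ and every $\eps>0$. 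Since $\psi_\eps\in C_0^\infty(G)$, convolution property~(2) gives $u*\psi_\eps\in C^\infty(G)$, and consequently $u\in C^\infty(G)$.

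The one delicate point I anticipate is that, starting only from \eqref{mvp2}, the spherical identity $F(p,\cdot)=u(p)\sigma(S(0,1))$ is initially available only for almost every $r>0$. This is harmless for the convolution argument above, because the displayed $s$-integral only sees $F(p,\cdot)$ up to null sets. Once $u$ has been shown to be smooth, however, $F(p,\cdot)$ is continuous in $r$, so the almost-everywhere identity automatically upgrades to \eqref{mvp1} for every $r>0$, closing the equivalence.
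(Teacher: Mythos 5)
Your argument is correct, and its core — reduce everything to the polar-coordinate identity of Lemma~\ref{polcrds} and then show $u*\psi_\eps=u$ for the radial $C_0^\infty$ bump \eqref{molif1}, so that smoothness falls out of the convolution — is the same as the paper's. Where you genuinely diverge is in how the equivalence of \eqref{mvp2} and \eqref{mvp1} is obtained. The paper first proves that a function satisfying \eqref{mvp2} is \emph{continuous}, importing Proposition 4.1 of \cite{agg} together with the geodesic/doubling continuity of the measure and the equivalence of pseudonorms (Remark~\ref{rem-psn}); with continuity in hand, the vanishing of $\int_{R_1}^{R_2}h(r)r^{Q-1}dr$ for all $R_1<R_2$ forces $h\equiv 0$, i.e.\ \eqref{mvp1} for \emph{every} radius, and only then is the convolution computed. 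You instead differentiate $R\mapsto\int_0^R r^{Q-1}F(p,r)\,dr$ via Lebesgue differentiation, obtaining the spherical identity only for a.e.\ $r$ (with the null set depending on $p$), observe correctly that this suffices for the $s$-integral in the convolution, and then bootstrap: smoothness of $u$ makes $F(p,\cdot)$ continuous and upgrades the identity to all $r$. Your route is more self-contained — it avoids the metric-measure machinery of \cite{agg} entirely and needs no a priori regularity of $u$ beyond $L^1_{\loc}$ — at the cost of the (harmless, and clearly flagged) a.e.\ bookkeeping; the paper's route yields continuity and \eqref{mvp1} for all radii before any mollification, which it also reuses elsewhere. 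A second, smaller difference: the paper passes from $u(p\delta_{\eps r}(v^{-1}))$ to $u(p\delta_{\eps r}(v))$ by asserting invariance of the surface integral under $v\mapsto v^{-1}$ from $dq=dq^{-1}$, whereas you symmetrize at the level of the group integral, using $\psi_\eps(w)=\psi_\eps(w^{-1})$ (symmetry of $\mathcal{N}$) and inversion-invariance of Haar measure on the unimodular group; your version is arguably the cleaner justification of the same step.
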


\begin{proof} The implication \eqref{mvp1}$\implies$\eqref{mvp2} follows immediately from  \eqref{polarcoordCarn}. To check that also the opposite implication holds true, let us first observe that functions in $\harm(G)$ are continuous in $G$. Indeed, Proposition 4.1 in \cite{agg} stays that continuity of measure $\mu$ with respect to the underlying metric $d$ implies that functions in $\harm(\Om, \mu, d)$ are continuous, see also \eqref{m-cont-m}. The proof of this observation relies on the estimate similar to the one in the beginning of Theorem~\ref{thm-diff-harm-fun}. If $d$ is a pseudodistance determined by $\mathcal{N}$ instead of a distance, then we proceed as follows. Since group $G$ equipped with the Lebesgue measure $dq$ and the sub-Riemannian distance $d_s$ is geodesic and doubling, then by Proposition~\ref{prop-dbl}(a) we get, that $dq$ is continuous with respect to $d_s$. Remark~\ref{rem-psn} enables us to conclude that $dq$ is continuous also with respect pseudodistance $d$ and hence, Proposition 4.1 in \cite{agg} applies to $d$ as well. Thus, we conclude continuity of $u$ in $G$.

Next, we observe that if \eqref{mvp2} holds, then for $p\in G$ we have
$$
\int_{0}^{R} \left ( u(p)\int_{S(0,1)} d \sigma(v)-\int_{S(0,1)} u(p \delta_r(v))\, d \sigma(v) \right )r^{Q-1} dr =0 $$
and so it follows that
\begin{align}
\int_{R_1}^{R_2} \left ( u(p)\int_{S(0,1)} d \sigma(v)-\int_{S(0,1)} u(p \delta_r(v))\, d \sigma(v) \right )r^{Q-1} dr =0 \label{avinpol}
\end{align} for all admissible  $R_1$ and $R_2$ satisfying $0<R_1<R_2$. If $h(r)$ denotes the expression within the brackets in \eqref{avinpol}, then by the mean value theorem, we have $h(r)r^{Q-1}=0$ for all admissible  $r$, and so  $h(r)=0$ for all such $r$.

To check the converse, we first note that since $dq = dq^{-1}$, the mean value theorem also implies that
\begin{align*}
\int_{S(0,1)}  u( \delta_r(v) ) \,  d \sigma(v)=\int_{S(0,1)}  u( \delta_r(v^{-1}) ) \, d \sigma(v).
\end{align*}

We are now in a position to  show that \eqref{mvp1} results in $u\in C^\infty$. Let $\psi$ be the $C_0^\infty$ bump given by \eqref{molif1} and let $u \in L^1_{\loc}(G)$. Then
\begin{align}
	u * \psi_\eps(p) &= \int_{G} u(pq^{-1}) \psi_\eps(q)dq  \nonumber \\
	&= \int_{B(0,\eps)} u(pq^{-1}) \psi(\delta_{1/\eps}(q)) \frac{1}{\eps^Q} dq \nonumber  \\
	&= \int_{B(0,1)} u(p \delta_{\eps}(w^{-1})) \psi(w)dw \qquad (w:=\delta_{1/\eps}(q)) \nonumber \\
	&= \int_0^1 \int_{S(0,1)}  u(p \delta_{\eps r}( v^{-1})) \psi(\delta_r(v) ) r^{Q-1} d \sigma(v) dr \qquad (v:=\delta_{1/r}(w)) \nonumber \\
	&= \int_0^1 \int_{S(0,1)}  u(p \delta_{\eps r}( v^{-1}))  d \sigma(v) \, \eta(r) r^{Q-1} dr  \quad  \qquad (\eta(r):=\psi(\delta_r(v) )) \nonumber \\
	&= \int_0^1 \int_{S(0,1)}  u(p \delta_{\eps r}( v))  d \sigma(v) \, \eta(r) r^{Q-1} dr \nonumber \\
	&= u(p) \int_0^1 \int_{S(0,1)}  \eta(r) r^{Q-1} d \sigma(v) dr. \qquad  (\hbox{by }\eqref{mvp1}). \label{conveq1}
\end{align}

	Furthermore, we also have that
	\begin{align}
	1= \int_{B(0,1)} \psi(q) dq & =\int_0^1 \int_{S(0,1)} \psi( \delta_r(v)) r^{Q-1} d \sigma(v) dr \nonumber \\
	&= \int_0^1 \int_{S(0,1)} \eta(r) r^{Q-1} d \sigma(v) dr.   \label{conveq2}
	\end{align}

	Hence, it follows from \eqref{conveq1} and \eqref{conveq2} that for any $p\in G$
	\begin{align*}
	u * \psi_\eps(p)  &=u(p).
	\end{align*}

 This, by the smoothing properties of convolutions, implies that $u$ is $C^\infty$ and completes the proof of the theorem.
\end{proof}	

\begin{rem}\label{rem-weak-smooth}
 Let us observe that the technique used in the above proof does not give the $C^{\infty}$-regularity for weakly harmonic functions. Indeed, the use of convolutions $u * \psi_\eps$ requires the mean value property to hold for all radii in a given ball, a property that fails in general for functions satisfying Definition~\ref{defn-w-harm}.
\end{rem}

\subsection{Strongly harmonic functions are $\opl$-harmonic}\label{sub-Lharm}

Recall, that in this section we allow $d$ in Definition~\ref{defn-harm} to be a pseudodistance. In the proof we appeal to the $C^{\infty}$-regularity of strongly harmonic functions, cf. Theorem~\ref{thm-mvp-smooth}. However in fact $C^2$-regularity is enough for the result to hold. Recall the definition of an $\opl$-gauge as a pseudometric derived from the fundamental solution of the operator $\opl$ (see Formula~\ref{def-lgauge}).
\begin{thm} \label{thm-mvp}
 Let $\Om$ be a domain in a Carnot-Carath\'eodory group $G$ with the Hausdorff dimension $Q$. Let further $u:\Omega \to \mathbb{R}$ be strongly harmonic with respect to the balls given by an $\opl$-gauge $\N$. Then $u$ is $\mathcal{L}$-harmonic.
\end{thm}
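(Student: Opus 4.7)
The plan is as follows. First, since $\N$ is an $\opl$-gauge, it is a pseudonorm and induces a pseudodistance (Theorem~\ref{thm-fund-psn}), so Theorem~\ref{thm-mvp-smooth} yields $u\in C^\infty(\Omega)$. Fix $p\in\Omega$ and $r_0>0$ small enough that $B(p,r)\Subset\Omega$ for $r\le r_0$. Working in exponential coordinates on the normal model $(\mathfrak g,*)$, the substitution $q=p\exp(\delta_rW)$ with $dq=r^{Q}\,dW$ rewrites the strong harmonicity as
\[
u(p)=\frac{1}{|B(0,1)|}\int_{B(0,1)} u(p\exp(\delta_rW))\,dW,\qquad r\in(0,r_0],
\]
where $B(0,1)=\{V\in\mathfrak g:\N(\exp V)\le 1\}$.

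Next, I would Taylor-expand the integrand via the Folland--Stein formula $u(p\exp V)=\sum_{k\ge 0}\tfrac{1}{k!}(\tilde V^{k}u)(p)$ with $\tilde V=\sum_{i}V_i\tilde X_i$. Setting $V=\delta_rW$ gives $\tilde V=\sum_i r^{d_i}W_i\tilde X_i$, where $d_i$ is the stratum degree of $E_i$; collecting powers of $r$, the $r^{2}$-coefficient of $u(p\exp(\delta_rW))$ is
\[
\sum_{i:\,d_i=2} W_i\,\tilde X_i u(p)\;+\;\tfrac{1}{2}\sum_{i,j:\,d_i=d_j=1}W_iW_j\,\tilde X_i\tilde X_j u(p).
\]
Since $\N$ is symmetric (Definition~\ref{defn-pseudon}(iii)), the inversion $p\mapsto p^{-1}$ corresponds to $W\mapsto -W$ in exponential coordinates, so $B(0,1)$ is $-1$-invariant and every monomial of odd total degree in $W$ integrates to zero. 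This kills the first sum above, as well as every odd $r$-coefficient of $\int_{B(0,1)} u(p\exp(\delta_rW))\,dW$. The surviving $r^{2}$-coefficient is
\[
\tfrac{1}{2}\sum_{i,j:\,d_i=d_j=1} A_{ij}\,\tilde X_i\tilde X_j u(p),\qquad A_{ij}:=\int_{B(0,1)}W_iW_j\,dW.
\]

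I would then show that the symmetric matrix $A$ is a positive multiple of the identity on $\mathfrak g_1$, so that the above expression equals $\tfrac{c}{2}\opl u(p)$ for some $c>0$. The argument uses that $\N^{2-Q}$ is the canonical fundamental solution of $\opl$, hence $\N$ is invariant under every Lie algebra automorphism of $\mathfrak g$ that restricts to an orthogonal transformation of $\mathfrak g_1$; for Heisenberg and, more generally, $H$-type groups this action fills out $O(\mathfrak g_1)$ and forces $A$ to be isotropic. Once $A$ is known to be scalar, the mean value equation forces the $r^{2}$-coefficient of the left hand side to vanish, so $\opl u(p)=0$; as $p\in\Omega$ is arbitrary, $u$ is $\opl$-harmonic.

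The step I expect to be the main obstacle is the isotropy of $A$ on $\mathfrak g_1$: it is transparent in the Heisenberg and $H$-type settings from the rotational symmetry of the Folland--Kaplan gauge, but for a general Carnot group with a smaller orthogonal automorphism group one must argue more carefully. A robust alternative, should the symmetry argument prove insufficient, is to apply Green's identity for $\opl$ on the gauge ball with the Dirichlet Green kernel $G_r(q)=\N(p^{-1}q)^{2-Q}-r^{2-Q}$, which expresses $\fint_{B(p,r)}u\,dq-u(p)$ directly as the integral of $\opl u$ against a strictly positive kernel in $B(p,r)$; letting $r\to 0$ then yields $\opl u\equiv 0$.
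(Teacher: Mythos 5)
Your first two steps (smoothness via Theorem~\ref{thm-mvp-smooth}, stratified Taylor expansion in $r$, killing odd terms by the central symmetry $W\mapsto -W$ of the gauge ball) are fine, but they only yield the vanishing of
\[
\sum_{i,j\,:\,d_i=d_j=1} A_{ij}\,\tilde X_i\tilde X_j u(p),\qquad A_{ij}=\int_{B_{\mathcal N}(0,1)}W_iW_j\,dW ,
\]
i.e.\ that $u$ is harmonic for the generalized sub-Laplacian $L_A$, not for $\mathcal L$. The whole theorem therefore hinges on the isotropy claim $A=cI$ on $\mathfrak g_1$, and this is exactly where the argument breaks for the generality claimed in the statement: the statement is for an arbitrary Carnot group and the $\opl$-gauge of an arbitrary sub-Laplacian, and the group of strata-preserving automorphisms acting orthogonally on $\mathfrak g_1$ (which is all your invariance argument can exploit, since the gauge is only canonically invariant under symmetries of $\opl$ itself) can be very small --- for a generic step-two or filiform algebra it does not act transitively enough on horizontal directions to force a scalar second-moment matrix. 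So outside the Heisenberg/$H$-type situation, where $\mathcal N$ depends only on $|\pi_1 X|$ and $|\pi_2 X|$, the key step is unproven, and nothing in the truth of the theorem forces $A$ to be scalar (strong harmonicity could simply impose both $L_Au=0$ and $\opl u=0$), so you cannot argue backwards either.

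The fallback you propose is not a repair but is actually false. A Green-identity computation on gauge balls with $v=\mathcal N(p^{-1}\cdot)^{2-Q}-r^{2-Q}$ does not produce the plain Lebesgue average: the boundary term $\int_{\partial B(p,r)}u\,\langle A\nabla v,n\rangle\,dS$ is not a constant multiple of surface measure on a gauge sphere, and carrying it out gives the $|\nabla_0\mathcal N|^2$-weighted means, exactly as in \eqref{mvpvol} (Theorem~\ref{thm-Lharm}) and \eqref{eq-mvp-lapl2}. If your claimed identity (plain average minus $u(p)$ equals the integral of $\opl u$ against a positive kernel) were true, every $\opl$-harmonic function would satisfy the plain mean value property, which Example~\ref{ex-sph} ($P^2_{0,0}=2t^2-|z|^4$, whose gauge-ball average exceeds its value by $R^4/4$) explicitly refutes. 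For comparison, the paper avoids both difficulties: it rewrites strong harmonicity as the convolution identity $u*g_r=u*g_s$ with $g_t=|B(0,t)|^{-1}\chi_{B(0,t)}$, solves $\mathcal L^R w_{r,s}=g_s-g_r$ with compactly supported $w_{r,s}$ built from the fundamental solution of the right-invariant sub-Laplacian, deduces $(\mathcal L u)*w_{r,s}=0$, and then lets $s\to 0$ through a dilation/approximate-identity argument to conclude $\mathcal L u=0$; no symmetry of the gauge ball beyond $\mathcal N(p^{-1})=\mathcal N(p)$ is needed.
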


\begin{proof}
   By the definition, $u\in L_{loc}^1(\Omega)$ and satisfies the mean-value property at every point $p\in \Om$ and any ball $B(p,r) \Subset \Om$. Let $d$ stand for a pseudometric defined by an $\opl$-gauge $\N$ as in Definition~\ref{defn-pseudod}. For any $s>0$ let us denote by $\Omega_s:=\{ p \in \Omega \, : \, d(p,\bd \Om )>s \}$. Set
   $$
    g_t := \frac{1}{|B(0,t)|} \chi_{B(0,t)}.
   $$
   Then, formulas \eqref{defn-subs-smooth} and \eqref{eq-conv} imply that for all admissible radii $0<r<s$ (i.e. such that $B(p, s)\Subset \Om$) we have the following equation:
\begin{equation}
 u*g_r(p)=u*g_s(p) \label{eq-eq}
\end{equation}
for all $p \in \Omega_s$.

\emph{Claim:} Let $\mathcal{L}^R$ be the right-invariant Laplacian corresponding to $\mathcal{L}$, cf. \eqref{sublap}. Then for any $0 < r < s$, the non-homogeneous equation
\begin{equation}
 \mathcal{L}^R w_{r,s}=g_{s}-g_{r}\label{eq-nonh}
\end{equation}
admits a $C^{1}$ solution with compact support in $B(0, s)$. In order to prove the claim we  adopt the proof of Lemma 4.1 in Gilbarg--Trudinger~\cite{gt} and refer to Theorem 4.5 in Ricciotti~\cite{Ricciotti} on the growth estimates for the fundamental solution and its horizontal gradient. Since both the result and the employed techniques are classical we restrict our discussion to sketch only.

Let $\Gamma^R$ be a fundamental solution of $\opl^R$ as above (cf. Theorem~\ref{thm-fund-exist}). We define
\begin{align}
w_{r,s}(p)&:=\int_{\Om} \Gamma^R(pq^{-1})(g_{s}-g_{r})(q)\,dq  \nonumber \\
&=\frac{1}{|B(0,s)|}\int_{B(0,s)\setminus B(0,r)} \Gamma^R(pq^{-1})\,dq+\left(\frac{1}{|B(0,s)|}-\frac{1}{|B(0,r)|}\right)\int_{B(0,r)} \Gamma^R(pq^{-1})\,dq. \label{eq-wrs}
\end{align}

Since $\Gamma^R\in L^1_{loc}(G)$ we have $w_{r,s}\in L^{1}_{loc}(\Om)$. Notice also, that $B(0,s)={\rm supp}\, w_{r,s}$. Moreover, since for any $s$ such that $0<r<s$, the set $\overline{\Om_s}$ is compact, \cite[Theorem 4.5]{Ricciotti} applies (cf. also Remark 6.18 in Capogna~\cite{cap2}). In particular, upon setting $\Gamma^R_p(q^{-1}):=\Gamma^R(pq^{-1})$ for any fixed $p\in B(0,s)$ inequality \cite[(4.71)]{Ricciotti} gives us the following estimate for the horizontal gradient $\nabla_0\Gamma^R_p(q^{-1})$ ($\nabla_{\mathbb{H},q^{-1}}\Gamma^R$ in the notation of \cite{Ricciotti}):
\[
|\nabla_{0} \Gamma^R_p(q^{-1})|\leq C\frac{d(p,q^{-1})}{|B_{d(p,q^{-1})}(p)|}.
\]
This estimate, the definition of $w_{r,s}$ in \eqref{eq-wrs} and standard telescopic argument imply that $\nabla_0 w_{r,s}\in L^1_{loc}(\Om)$. Indeed, let us fix any domain $\Om'\Subset G$ and observe that if $p\in B(0,s)$, then
$B(0,s)\subset B(p, 2s)$. Furthermore, recall that $dq=dq^{-1}$ and that domains $B(0,s)$ and $B(0,s)\setminus B(0,r)$ are symmetric with respect to the origin, hence both $q$ and $q^{-1}$ are contained in these domains. Therefore, for all $p\in \Om'$ we obtain the following estimate
\begin{align*}
 &|\nabla_{0} w_{r,s}(p)|\\
 &\leq \frac{1}{|B(0,s)|}\int_{B(0,s)\setminus B(0,r)} |\nabla_{0} \Gamma^R_p(q^{-1})|\,dq+\left(\frac{1}{|B(0,s)|}-\frac{1}{|B(0,r)|}\right)\int_{B(0,r)} |\nabla_{0}\Gamma^R_p(q^{-1})|\,dq\\
 & = \frac{1}{|B(0,s)|}\int_{B(0,s)\setminus B(0,r)} |\nabla_{0} \Gamma^R_p(q)|\,dq+\left(\frac{1}{|B(0,s)|}-\frac{1}{|B(0,r)|}\right)\int_{B(0,r)} |\nabla_{0}\Gamma^R_p(q)|\,dq\\
 &\leq \frac{1}{|B(0,s)|} \int_{B(0,s)\setminus B(0,r)} \frac{d(p,q)}{|B_{d(p,q)}(p)|}\,dq
 +\left(\frac{1}{|B(0,s)|}-\frac{1}{|B(0,r)|}\right)\int_{B(0,r)} \frac{d(p,q)}{|B_{d(p,q)}(p)|}\,dq \\
 &\leq \frac{1}{|B(0,1)|}\left(\,\frac{2}{s^Q}+\frac{1}{r^Q}\,\right)\int_{B(0,s)}\frac{d(p,q)}{|B_{d(p,q)}(p)|} \,dq \\
 &\leq \frac{1}{|B(0,1)|}\left(\,\frac{2}{s^Q}+\frac{1}{r^Q}\,\right)\int_{B(p,2s)}\frac{d(p,q)}{|B_{d(p,q)}(p)|} \,dq \\
 &\leq \frac{1}{|B(0,1)|}\left(\,\frac{2}{s^Q}+\frac{1}{r^Q}\,\right)\sum_{k=-1}^{\infty}\frac{s}{2^k}\int_{B(p,\frac{s}{2^k})\setminus B(p,\frac{s}{2^{k+1}})}\frac{1}{|B_{d(p,q)}(p)|} \,dq \\
 &\leq \frac{1}{|B(0,1)|}\left(\,\frac{2}{s^Q}+\frac{1}{r^Q}\,\right)\sum_{k=-1}^{\infty}\frac{s}{2^k} \frac{|B(p,\frac{s}{2^k})|}{|B(p, \frac{s}{2^{k+1}})|}\\
 &\leq \frac{8s}{|B(0,1)|}\left(\,\frac{2}{s^Q}+\frac{1}{r^Q}\,\right)
\end{align*}
where in order to obtain the last estimate we employ the doubling property of the measure. From this we infer that
\[
\int_{B(0,s)}|\nabla_{0} w_{r,s}(p)|\,dp\leq 4\left(s+\left(\frac{s}{r}\right)^Q\right).
\]
Hence, it follows that $|\nabla_{0} w_{r,s}|\in L^1_{loc}(G)$ since the domain $\Om'$ is arbitrary. In particular, if $r=s/2$ the $L^{1}_{loc}$-estimate is uniform in $s$ for $s\to 0^{+}$. We appeal to this observation later in this proof.

In fact, as we now show, $w_{r,s}\in C^{1}(\Om)$. This discussion allows us to compute
$\opl^R(w_{r,s})$ in a weak sense and obtain that $w_{r,s}$ is a solution to \eqref{eq-nonh}.

By employing the reasoning similar to \cite[Lemma 4.1]{gt}, for any $\ep>0$ we define
\[
 w_\ep(p)=\int_{\Om} \Gamma^R(p^{-1}q)(g_{s}-g_{r})\eta\left(\frac{\mathcal{N}(p^{-1}q)}{\ep}\right)\,dq,
\]
where $\eta\in C^1(\R)$, $0\leq \eta \leq 1$, $0\leq \eta'\leq 2$ and $\eta\equiv 0$ for all $t\leq 1$ whereas
for all $t\geq 2$ we require that $\eta\equiv 1$. Similarly to the Euclidean case one proves that $w_\ep\in C^1(G)$ and, furthermore, that
\[
 w_{\ep} \to w_{r,s}\quad \hbox{ and }\quad \nabla_0 w_{\ep} \to \int_{\Om} \left( \nabla_0 \Gamma^R(p^{-1}q)\right)\,(g_{s}-g_{r})\,dq,
\]
as $\ep \to 0$ uniformly on compacta in $\Om$. This completes the proof of the claim.

It now follows from Property 4 in Section~\ref{sect-conv} and \eqref{eq-eq} that
$$
(\mathcal{L}u )* w_{r,s}=u * \mathcal{L}^R  w_{r,s}=u * g_s-u * g_r =0
$$
for all $0 < r < s$.
%(See Folland and Stein page 19)
Next we show that $\mathcal{L}u = 0$ on $\Omega$. In order to complete this goal, denote $\phi_s:=-w_{s/2,s}$. Then
$$
\mathcal{L}^R (s^{Q-2} \phi_s \circ \delta_s)  = \frac{1}{|B(0,1)|} \left ( (2^Q-1) \chi_{B(0,1/2)} -\chi_{B(0,1) \setminus B(0,1/2)} \right )=\mathcal{L}^R (\phi_1).
$$
A similar argument to that in Proposition 5.3.12 of \cite{blu} shows that
\[
 \Gamma^R(\delta_{s}(pq^{-1}))=s^{2-Q}\Gamma^R(pq^{-1}),
\]
and so the following identity holds:
\begin{align*}
&\int_G \phi_s(p)\,dp \\
&=\frac{1}{s^{Q-2}} \int_G  s^{Q-2} \phi_s \circ \delta_s(p) s^Q dp \\
& = s^{2} \int_{B(0,s)}\!\!\! s^{Q-2} \left[\frac{1}{|B(0,s)|}\int_{_{B(0,s)\setminus B(0,s/2)|}} \!\!\!\Gamma^R(\delta_s(p)q^{-1})\,dq+\left(\frac{1}{|B(0,s)|}-\frac{1}{|B(0,s/2)|}\right)\int_{_{B(0,s/2)}} \!\!\!\Gamma^R(\delta_s(p)q^{-1})\,dq\right]\,dp \\
& = s^{2} \int_{B(0,s)}\!\!\! s^{Q-2} \left[\frac{1}{|B(0,1)|}\int_{_{B(0,1)\setminus B(0,1/2)}} \!\!\!\Gamma^R(\delta_s(pq^{-1}))\,dq+\left(\frac{1}{|B(0,1)|}-\frac{1}{|B(0,1/2)|}\right)\int_{_{B(0,1/2)}} \!\!\!\Gamma^R(\delta_s(pq^{-1}))\,dq\right]\,dp \\
& = s^{2} \int_{B(0,s)} \left[\frac{1}{|B(0,1)|}\int_{_{B(0,1)\setminus B(0,1/2)}} \!\!\Gamma^R(pq^{-1})\,dq+\left(\frac{1}{|B(0,1)|}-\frac{1}{|B(0,1/2)|}\right)\int_{_{B(0,1/2)}} \!\!\Gamma^R(pq^{-1})\,dq\right]\,dp \\
& = s^2 \int_G \phi_1(p)\,dp.
\end{align*}
Let us denote $c:=\int_G \phi_1(q)dq$ and let $\Omega' \subset \Omega$ be compact. We define the following function:
$$
F^q(p)=
\begin{cases}
\mathcal{L}u(pq^{-1}), \quad & \hbox{for }p\in \Omega' q\\
0, \quad &{\rm otherwise}
\end{cases}
$$
and
$$
F(p)=\begin{cases}
\mathcal{L}u(p), \quad & \hbox{for }p\in \Omega'\\
0, \quad &{\rm otherwise}.
\end{cases}
$$
With this notation, we have
\begin{align*}
F* \frac{1}{s^2}\phi_s(p)- F(p) \int_G \phi_1(q)dq &=  \frac{1}{s^2} \int_G  (F(pq^{-1})- F(p))\phi_s(q) dq \\
& =  \frac{1}{s^2} \int_G (F^q(p)-F(p)) \phi_s(q) dq\\
& =  \frac{1}{s^2} \int_G (F^{\delta_s(q)}(p)-F(p)) \phi_s \circ \delta_s(q) s^Q dq\\
& =   \int_G (F^{\delta_s(q)}(p)-F(p)) \phi_1 (q) dq.
\end{align*}

By the Minkowski inequality applied to a fixed value of $1\leq \alpha <\infty$ we get the following estimate:

\begin{align*}
\| F* \frac{1}{s^2}\phi_s- cF\|_{L^\alpha(G)}
& \leq    \int_G \| F^{\delta_s(q)}-F \|_{L^\alpha(G)}  |\phi_1 (q)| dq.
\end{align*}

Recall that by Theorem~\ref{thm-mvp-smooth} we have $u\in C^{\infty}(G)$. This together with the definition of $F$ allows us to infer that $F\in L^\alpha(G)$ for all $1 \leq \alpha <\infty$. Since for all $p\in G$ it holds that $\| F^{\delta_s(q)}(p)-F(p)\|_{L^\alpha(G)} \leq 2\|F(p))\|_{L^\alpha(G)}$, the dominated convergence theorem implies that
$$
 \lim_{s \to 0}\| F* \frac{1}{s^2}\phi_s- cF\|_{L^\alpha(G)} = 0.
$$
Since $F* \frac{1}{s^2}\phi_s(p)=0$ we conclude that $\mathcal{L} u=0$ a.e. in  $\Omega'$. Therefore, we conclude that $\mathcal{L} u=0$ on  $\Omega$ as $\Omega'$ is an arbitrary compact subset of $\Omega$.
\end{proof}

 Theorem \ref{thm-mvp} shows that in Carnot groups, the strongly harmonic functions are a subfamily of the $\mathcal{L}$-harmonic functions. The opposite relation does not hold in general as demonstrated in Example~\ref{ex-sph} below, where a spherical harmonic polynomial, by definition satisfying the $\opl$-harmonic equation, is shown not to be strongly harmonic. We postpone this example till Section~\ref{sec-harm-hei1} and discuss the spherical harmonics in more detail there. Moreover, in Section~\ref{sec-harm-hei1} we identify a subclass of spherical harmonic polynomials in $\Hei$ which are strongly harmonic.

 We close this section with a consequence of Theorem \ref{thm-mvp}, the so-called three spheres theorem. This part of the presentation is based on \cite{aw1}. There, we show several variants of three-spheres theorems for sub-elliptic equations in Carnot groups of Heisenberg-type ($H$-type groups).

 The classical Hadamard three-circles theorem in $\R^2$ asserts that given three concentric circles with radii $0<R_1<R<R_2$ and a subharmonic function $u$ in the plane, the maximum of $u$ over a circle with radius $R$ is a convex function of $\log R$, with coefficients depending on the ratios of $R_1, R$ and $R_2$. The three-circles theorem has been generalized in various settings, including subharmonic functions in $\R^n$ for $n>2$, higher-dimensional concentric surfaces (e.g. three-spheres theorems), more general linear and quasilinear elliptic equations, the heat equation (three-parabolas theorem) and coupled elliptic systems of equations, see \cite{aw1} for further details and references.

 \noindent Let $G$ be an $H$-type group and $\Om\subset G$. For a function $u:\Om \to \R$ we define
\begin{equation*}
 M(r)= \sup \{ u(X) \, : \, X \in \Om, \, \,  |X|=r\}.
\end{equation*}

 The following observation holds.

 \begin{cor}\label{thm:3spheres-harm-peq2}
  Let $G$ be an $H$-type group and $\Om\subset G$ be a domain containing the identity element of $G$. Assume that $u: \Om \to \R$ is strongly harmonic in $\Om$ with respect to the $\opl$-gauge. Moreover, let us consider three concentric gauge-norm-spheres with radii $r_1<r<r_2$ contained in $\Om$.  Then
 \begin{equation}\label{eq:3pheres-lapl}
M(r)\leq M(r_1)\frac{r^{2-Q}-r_2^{2-Q}}{r_1^{2-Q}-r_2^{2-Q}} + M(r_2)\frac{r_1^{2-Q}-r^{2-Q}}{r_1^{2-Q}-r_2^{2-Q}}.
  \end{equation}
  Equality holds if and only if $u(X)\equiv \phi(r)$, where $r=|X|$ and $\phi$ is a function on the right-hand side of \eqref{eq:3pheres-lapl}.
\end{cor}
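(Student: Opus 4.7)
The plan is to pivot on Theorem~\ref{thm-mvp}: since $u$ is strongly harmonic with respect to the $\opl$-gauge, it is $\opl$-harmonic on $\Omega$, so in particular it satisfies the weak and strong maximum principles on any relatively compact subdomain (Bony's maximum principle for sub-Laplacians). This reduces the corollary to a classical barrier argument on the gauge-annulus $A=\{X\in G : r_1<|X|<r_2\}\Subset\Om$.

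The key observation is the defining property \eqref{def-lgauge} of the $\opl$-gauge: the radial function $X\mapsto |X|^{2-Q}$ is $\opl$-harmonic on $G\setminus\{0\}$. Therefore, for any real constants $a,b$, the function
\[
v(X) := a + b\,|X|^{2-Q}
\]
is $\opl$-harmonic on $G\setminus\{0\}$ and depends only on $|X|$, so it is constant on every gauge-sphere. Choose $a,b$ as the unique solution of the linear system $v\big|_{|X|=r_i}=M(r_i)$ for $i=1,2$; solving gives
\[
b=\frac{M(r_1)-M(r_2)}{r_1^{2-Q}-r_2^{2-Q}},\qquad a=M(r_1)-b\,r_1^{2-Q}.
\]

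Now compare $u$ and $v$ on $A$. On the inner and outer gauge-spheres one has $u\le M(r_i)=v\big|_{|X|=r_i}$ by definition of $M$, so $u-v\le 0$ on $\partial A$. Both functions are $\opl$-harmonic in $A$ (and continuous up to $\overline{A}$, since strongly harmonic functions are continuous by Theorem~\ref{thm-mvp-smooth}), hence the weak maximum principle yields $u-v\le 0$ throughout $A$. Evaluating at any point $X$ with $|X|=r$ and substituting the closed-form expression for $v(r)=a+br^{2-Q}$, a routine algebraic rearrangement produces exactly the right-hand side of \eqref{eq:3pheres-lapl}, which gives the desired inequality after taking the supremum over $|X|=r$.

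For the equality case, suppose $M(r)=v(r)$ for some $r\in(r_1,r_2)$. Then $u-v$ attains its interior maximum $0$ at some point of the sphere $\{|X|=r\}\subset A$. The strong maximum principle for $\opl$-harmonic functions forces $u-v\equiv 0$ on the connected component of $A$ containing that point; since the annulus $A$ is connected (for $H$-type groups the $\opl$-gauge induces a genuine metric by a theorem of Cygan, so gauge-spheres are connected), we conclude $u\equiv v$ on $A$, i.e.\ $u(X)=\phi(|X|)$ with $\phi$ the radial function appearing on the right-hand side of \eqref{eq:3pheres-lapl}. The only possible obstacle is the verification that the strong maximum principle is applicable in our setting, but this follows from Bony's theorem once one knows that the horizontal vector fields generating $\opl$ satisfy H\"ormander's condition, which they do on any Carnot group by stratification.
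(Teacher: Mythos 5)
Your argument is correct in substance, but it takes a different route from the paper: the paper's proof is a two-line reduction, invoking Theorem~\ref{thm-mvp} to pass from strong harmonicity to $\opl$-harmonicity and then citing Theorem 4 of \cite{aw1}, where the three-spheres inequality for $\opl$-(sub)harmonic functions on $H$-type groups is established. You perform the same first reduction but then reprove the second ingredient from scratch: comparison on the gauge annulus with the radial $\opl$-harmonic barrier $v=a+b\,\mathcal{N}^{2-Q}$ (legitimate precisely because of the defining property \eqref{def-lgauge} of the $\opl$-gauge), the weak maximum principle for the inequality, and the strong (Bony) maximum principle for the equality case. This buys a self-contained proof, at the cost of redoing what \cite{aw1} already provides in greater generality (subsolutions and quasilinear operators). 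Two small points to tighten: connectedness of the gauge annulus does not follow from the gauge being a genuine metric (Cygan); it follows from homogeneity, since $X\mapsto\delta_{1/\mathcal{N}(X)}(X)$ retracts the connected set $G\setminus\{0\}$ onto the unit gauge sphere, so spheres and annuli are connected. Also, you should record (even in one line) the trivial converse of the equality statement, namely that if $u(X)=\phi(\mathcal{N}(X))$ with $\phi$ the right-hand side of \eqref{eq:3pheres-lapl} then equality holds, and note that $M(r)$ is attained because $u$ is continuous (by Theorem~\ref{thm-mvp-smooth}) on the compact gauge sphere, which is what lets you place the interior maximum point needed for the strong maximum principle.
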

 The proof of the corollary is the direct consequence of Theorem~\ref{thm-mvp} above and Theorem 4 in \cite{aw1}.

\subsection{The converse to mean value property}\label{subs-conv}

 In this section we study the opposite problem to investigations in the previous section. Namely, suppose that a function obeys a mean value property with respect to the underlying Lebesgue (Hausdorff) measure and for balls in a given gauge $d$ (not necessarily the $\opL$-gauge). Can we then provide necessary and sufficient conditions for such a function to be sub(super)harmonic with respect to $\opL$? We restrict our discussion to the setting of the first Heisenberg group $\Hei$ due to complexity of the corresponding computations in $\Hein$ for $n\geq 2$.

 Let us motivate our studies with the following theorem. Most important conclusion of this result, especially relevant from our point of view, is that $\mathcal{L}$-harmonic functions need not in general be strongly harmonic. We formulate Theorem~\ref{thm-Lharm} in a setting of CC-groups, even though in what follows we will need this result only for the case $G=\Hei$. This illustrates that a relation between $\opl$-harmonicity and harmonic functions as in Definitions~\ref{defn-harm} and~\ref{defn-w-harm} is involved for all CC-groups and requires further studies, see also discussion in Section~\ref{sec-harm-hei1}.

 \begin{thm}\label{thm-Lharm}
  Let $G$ be a Carnot--Carath\'eodory group and $u:\Om\to \R$ be an $\opl$-harmonic function in a domain $\Om\subset G$. Furthermore, let $\mathcal{N}$ be a pseudonorm defined by the fundamental solution of $\opl$, cf.~\eqref{eq-N-fund}. Then, the following volume mean value property holds for all $p\in G$ and all balls $B(p,r)\Subset \Om$:
 \begin{align}
 u(p) &= \frac{\int_{B(p,r)} u(q)|\nabla_0 \mathcal{N}( p^{-1} q)|^2 dq }{\int_{B(p,r)}|\nabla_0 \mathcal{N}( p^{-1} q)|^2  dq} =\frac{\int_{B(0,r)}   u(pq) |\nabla_0 \mathcal{N}( q)|^2 dq }{\int_{B(0,r)}    |\nabla_0 \mathcal{N}(q)|^2  dq}. \label{mvpvol}
 \end{align}
 \end{thm}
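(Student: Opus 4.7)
My strategy is to reduce to $p=0$ by left invariance, derive a weighted spherical mean value identity via Green's identity applied with the fundamental solution $\Gamma$, and then integrate in the radius. The second equality in \eqref{mvpvol} follows immediately from the change of variables $q\mapsto pq$ (which preserves Lebesgue measure and sends $B(0,r)$ onto $B(p,r)$). For the first equality, left invariance of $\opl$ implies that $q\mapsto u(pq)$ is $\opl$-harmonic near $0$, so it suffices to prove
\[
u(0)\int_{B(0,r)}|\nabla_0\mathcal{N}(q)|^2\,dq=\int_{B(0,r)}u(q)|\nabla_0\mathcal{N}(q)|^2\,dq
\]
for every admissible $r>0$. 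By Theorem~\ref{thm-fund-psn} I may write $\Gamma=-c_0\mathcal{N}^{2-Q}$ for some $c_0>0$.

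For the surface formula, I would apply the Green-type identity
\[
\int_{D}(v\,\opl u-u\,\opl v)\,dq=\int_{\partial D}\bigl(v\langle\nabla_0 u,N\rangle-u\langle\nabla_0 v,N\rangle\bigr)\,d\sigma,
\]
valid for smooth domains because each left-invariant $\tilde X_i$ is divergence-free with respect to Lebesgue measure. Taking $v=\Gamma$ on the annulus $A_\ep:=B(0,r)\setminus B(0,\ep)$ kills the volume integral ($\opl u=0$ on $\Omega$, $\opl\Gamma=0$ off the origin). On each gauge sphere $\partial B(0,\rho)=\{\mathcal{N}=\rho\}$ the function $\Gamma$ is the constant $-c_0\rho^{2-Q}$, and the term $\int_{\partial B(0,\rho)}\langle\nabla_0 u,N\rangle\,d\sigma$ vanishes by a further application of the divergence theorem combined with $\opl u=0$. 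Using $N=\nabla\mathcal{N}/|\nabla\mathcal{N}|$ and the elementary identity $\langle\nabla_0\mathcal{N},\nabla\mathcal{N}\rangle=|\nabla_0\mathcal{N}|^2$, a direct calculation gives $\langle\nabla_0\Gamma,N\rangle=c_0(Q-2)\rho^{1-Q}|\nabla_0\mathcal{N}|^2/|\nabla\mathcal{N}|$, so the surviving boundary contributions yield
\[
\rho^{1-Q}\int_{\partial B(0,\rho)}u\,\frac{|\nabla_0\mathcal{N}|^2}{|\nabla\mathcal{N}|}\,d\sigma\ \text{ is independent of }\rho\in(0,r].
\]
Converting each surface integral via the coarea-polar identity $\int_{\partial B(0,\rho)}(f/|\nabla\mathcal{N}|)\,d\sigma=\rho^{Q-1}\int_{S(0,1)}f(\delta_\rho v)\,d\sigma(v)$ and letting $\ep\to 0$ (using continuity of $u$ at $0$) produces the weighted spherical mean value identity
\[
u(0)\,I=\int_{S(0,1)}u(\delta_\rho v)\,|\nabla_0\mathcal{N}(v)|^2\,d\sigma(v),\qquad I:=\int_{S(0,1)}|\nabla_0\mathcal{N}(v)|^2\,d\sigma(v),
\]
valid for every $\rho\in(0,r]$.

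To pass to the volume formula, multiply both sides by $\rho^{Q-1}$ and integrate $\rho\in(0,r)$. Lemma~\ref{polcrds}, together with the fact that $|\nabla_0\mathcal{N}|^2$ is $\delta_\lambda$-homogeneous of degree $0$ (because $\mathcal{N}$ has degree $1$ and each $\tilde X_i\in\mathfrak{g}_1$ lowers degree by one, so $|\nabla_0\mathcal{N}(\delta_\rho v)|^2=|\nabla_0\mathcal{N}(v)|^2$), turns the right-hand side into exactly $\int_{B(0,r)}u(q)|\nabla_0\mathcal{N}(q)|^2\,dq$, while the left-hand side equals $u(0)\cdot I\cdot r^Q/Q$. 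The same reasoning applied with $u\equiv 1$ (or a direct polar-coordinate computation) gives $\int_{B(0,r)}|\nabla_0\mathcal{N}(q)|^2\,dq=I\cdot r^Q/Q$, and dividing the two identities produces \eqref{mvpvol}.

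The principal technical obstacle is the rigorous implementation of the Green identity on the gauge balls: the spheres $\partial B(0,\rho)$ are $C^\infty$ off the origin by Theorem~\ref{thm-fund-exist}, but they may contain characteristic points where $\nabla_0\mathcal{N}$ vanishes, and there the coarea factor $|\nabla\mathcal{N}|^{-1}$ and the co-normal derivative $\langle\nabla_0\Gamma,N\rangle$ become singular. One must argue — e.g.\ by a standard regularization of the gauge sphere followed by a limiting procedure, or by exploiting integrability of $|\nabla\mathcal{N}|^{-1}$ against $|\nabla_0\mathcal{N}|^2$ in polar coordinates — that this characteristic set contributes nothing, so that the boundary manipulations are justified. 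A secondary bookkeeping point is checking that the normalization constant $c_0$ cancels from both sides of the $\ep$-level identity before taking the limit, which is automatic because it enters as an overall multiplicative factor.
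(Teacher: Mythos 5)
Your argument is essentially the paper's own proof: Green's identity with $v=\Gamma$ on the annulus between two gauge spheres, vanishing of the $v\,\partial u$ terms because $\Gamma$ is constant on gauge spheres and $\int_{B}\opl u=0$, the resulting $\rho$-independence of the weighted spherical average with kernel $|\nabla_0\mathcal{N}|^2/|\nabla\mathcal{N}|$, continuity of $u$ at the center to identify the limit, and finally integration in the radius via coarea/polar coordinates to pass to the volume formula. The only cosmetic differences are that you reduce to $p=0$ by left invariance (the paper works at general $p$ with $v=-\Gamma\circ\tau_{p^{-1}}$ and the transform $T_r$), and that you explicitly flag the behaviour of $|\nabla\mathcal{N}|^{-1}$ on the gauge sphere, a point the paper passes over silently.
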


In the Euclidean setting we have $|\nabla_0 \mathcal{N}|^2\equiv 1$ and \eqref{mvpvol} reduces to the mean value over Euclidean balls. In fact it is known, see \cite[Chapter 5]{blu}, that if $|\nabla_0 \mathcal{N}|^2$ is a constant then $G$ is commutative and thus the geometry is Euclidean. It follows that equivalence between $\mathcal{L}$-harmonicity and the strong harmonicity in noncommutative Carnot groups does not occur, see the discussion following the proof of Theorem~\ref{thm-mvp} and Example~\ref{ex-sph} below.

The proof of the theorem is based on, nowadays, classical techniques employed in the studies of the Carnot-Carath\'eodory groups. Nevertheless, we present it for the sake of completeness of the presentation and in order to demonstrate some crucial differences between the Euclidean and CC-settings. We refer to Appendix for the proof of Theorem~\ref{thm-Lharm}.

Let $\Om\subset \Hei\setminus\{0\}$ and let $u\in \harm(\Om, \mu)$, where $\mu=dq$ is the $3$-Lebesgue measure (equivalently the $3$-Hausdorff measure, denoted $\Hau$). Moreover, we assume that $d$ is a metric on $\Hei$. Therefore, it holds that
\begin{equation}\label{eq-mvp-lapl1}
 u(p)=\frac{1}{|B_d(p,r)|}\int_{B_d(p,r)}u(q)\,dq,
\end{equation}
for all $p\in \Om$ and every $r>0$ such that $B_d(p,r)\Subset \Om$. By Theorem~\ref{thm-mvp-smooth} we know that $u\in C^{\infty}(\Om)$. On the other hand, by Theorem 5.6.1 in \cite{blu} we have that a $C^2$ function satisfies the following mean value property with respect to an $\opL$-gauge, denoted $\dL$:
\begin{align}
 u(p)=\frac{1}{r^4 \int \limits_{B_{\dL}(0,1)}|\nabla_0 \dL|^2 dq}&\Bigg[\,\int \limits_{B_{\dL}(p,r)} u(q) |\nabla_0 \dL|^2(p^{-1}q) dq \nonumber \\
 &-2\int \limits_{B_{\dL}(p,r)} \int_{0}^{r} \varrho^3\bigg(\int \limits_{B_{\dL}(p,r)}\left(\frac{1}{d^2(p^{-1}q)}-\frac{1}{\varrho^2}\right)\, \opL u(q) dq\bigg) d\varrho \Bigg]. \label{eq-mvp-lapl2}
\end{align}

Observe that the first expression on the right-hand side of \eqref{eq-mvp-lapl2} is the ratio in \eqref{mvpvol} for $\mathcal{N}\equiv d_{\opl}$. Furthermore, the direct computations, cf. \cite[Chapter 5]{blu}, give us that in the standard notation $p=(z,t)$ for coordinates of a point $p\in \Hei$ it holds:
\[
|\nabla_0 d_{\opl}(z,t)|^2=\frac{|z|^2}{\sqrt{|z|^4 + t^2}}.
\]
Next, let us assume that $u$ is super (sub) solution, i.e. $\opL u\geq (\leq) 0$ in $\Om$, respectively. Then, by combining \eqref{mvpvol}, \eqref{eq-mvp-lapl1} together with \eqref{eq-mvp-lapl2} we obtain the following condition to be satisfied by $u$:
\begin{equation*}
 u(p)=\frac{1}{|B_d(p,r)|}\int_{B_d(p,r)}u(q)\,dq \leq (\geq) \frac{4}{\pi r^4} \int \limits_{B_{\dL}(p,r)} \frac{|z(p^{-1}q)|^2}{\sqrt{|z(p^{-1}q)|^4+|t(p^{-1}q)|^2}} \, u(q) dq,
\end{equation*}

where $z(\cdot)$ and $t(\cdot)$ stand for, respectively, the $z$- and $t$-coordinates of a point in $\Hei$, see Section~\ref{sec-harm-hei1}. Moreover, note that
\[
 \frac{|B_{\dL}(p,r)|}{|B_d(p,r)|}=\frac{\pi/4}{|B_d(0,1)|}:=C_{d,\dL} <\infty,
\]
due to the left-invariance of the Hausdorff measure on $\Hei$. Hence, the necessary condition for $u$ to satisfy $\opL u\geq (\leq) 0$ in $\Om$ is
\begin{equation*}
C_{d,\dL}\|u\|_{L^1(B_d(p,r))}\leq (\geq) \|u |\nabla_\opL \dL|^2(p^{-1}\circ\cdot)\|_{L^1(B_{\dL}(p,r))} \qquad \hbox{for all }p\in \Om.
\end{equation*}

\section{Harmonicity on $\Hei$}\label{sec-harm-hei1}

The purpose of this section is to provide a large class of $\opl$-harmonic functions which in the same time are also strongly harmonic with respect to the $\opl$-gauge distance. Namely, a subset of the so-called \emph{spherical harmonic polynomials}, called for short, spherical harmonics. It is, perhaps, surprising that such a class exists, if one takes into account that a spherical harmonic function must satisfy two kinds of the mean value property: the one in Definition~\ref{defn-harm} and \eqref{mvpvol}. First, we recall the necessary definitions and set up the stage for main computations of the mean value property for a class of spherical harmonics. Then, we present an example of a spherical harmonic function (and hence a $\opl$-harmonic function) which fails to be strongly harmonic, see Example~\ref{ex-sph}. Finally, we address an open question about identifying all spherical harmonics which are strongly harmonic.

  The discussion below is, in fact, valid for all the Heisenberg groups $\Hein$, however for simplicity we restrict to the case $n=1$. Recall, that on $\Hei$ one introduces the coordinates $(z,t)$ where $z=x+iy \in \mathbb{C}$, $t \in \mathbb{R}$ and the multiplication is defined by
\begin{align*}
(z_1,t_1)(z_2,t_2)&=(z_1+z_2, t_1+t_2 + 2\, {\rm Im}\,(z_1 \bar z_2)) \nonumber\\
&=(x_1 + x_2 , y_1 + y_2, t_1+t_2 + 2(x_2y_1-x_1y_2)). %\label{hz}
\end{align*}
We observe that $(z,t)^{-1}=(-z,-t)$.

  Since the Dirichlet problem for $\mathcal{L}$ is solvable on $B(0,1)$, and $\mathcal{L}$ is analytically hypoelliptic (see \cite{Grein} and the references there in), there is a family of $\mathcal{L}$-harmonic polynomials which play the same role in $\Hei$ as do the spherical harmonics in $\mathbb{R}^n$. By analytic hypoellipticity, any harmonic function on $B(0,1)$ is real analytic and the "spherical harmonics" are naturally defined in terms of their homogeneous degree.

\begin{defn}
An $\mathcal{L}$-spherical harmonic of degree $\ell = 0, 1, 2 , \dots$,  is a polynomial in $z$, $\bar z$ and $t$, which is $\mathcal{L}$-harmonic and homogeneous of degree $\ell$ with respect to the Heisenberg dilation.
\end{defn}

 Using Kor\'anyi's formula in \cite{kor}, a basis for the $\mathcal{L}$-harmonic polynomials of homogeneous degree $2m+k+l$  can be enumerated in the form
 $$ P_{k,l}^m(z,t)=r_{k,l}^m(t+i|z|^2,t-i|z|^2)z^k \bar z^l, $$
 where the polynomial $z^k \bar z^l$ is $\mathcal{L}$-harmonic and
\begin{equation*}%\label{eq-sph-har}
 r_{k,l}^m(w,\bar w)= m! \sum_{j=0}^m   C(l,j)C(k,m-j) w^{m-j} \bar{w}^j
\end{equation*}
with
\begin{equation}\label{eq2-sph-har}
C(l,j) = \begin{cases}
1 \quad  {\rm if} \quad j=0 \\
\frac{1}{j!}\prod_{i=0}^{j-1} (\frac{1}{2}+l+i) \quad  {\rm if} \quad j>0.
\end{cases}
\end{equation}
Hence, the $\mathcal{L}$-harmonic functions $u$ on $B(0,1)$ have the form
$$
u(z,t)=\sum_{k,l,m} a_{k,l,m}P_{k,l}^m(z,t).
$$

On $\Hei$, the $\mathcal{L}$-harmonic polynomials of the form $z^k \bar z^l$ are precisely those for which $k=0$ or $l=0$, and so a basis for the $\mathcal{L}$-harmonic polynomials of homogeneous degree $2m+k$ is given by elements of the form
\begin{align*}
P_{0,k}^m(z,t)= r_{0,k}^m(t+i|z|^2,t-i|z|^2)\bar z^k \quad {\rm or} \quad P_{k,0}^m(z,t)= r_{k,0}^m(t+i|z|^2,t-i|z|^2) z^k.
\end{align*}

We are now in a position to state the key observation, shown by direct calculation.
\begin{observ}
 The following spherical harmonic polynomials on $\Hei$ are strongly harmonic for all $k\in \mathbb{N}$:
\begin{align*}
P^1_{0,k}(z,t)&=((1+k)t+ik|z|^2)z^k \quad {\rm and} \quad P^1_{k,0}(z,t)=((1+k)t-ik|z|^2)\bar z^k.
\end{align*}
\end{observ}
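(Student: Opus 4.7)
Since $P^1_{0,k}$ is a spherical harmonic by construction, $\opl$-harmonicity is automatic, so only the mean value property of Definition~\ref{defn-harm} with respect to $\opl$-gauge balls needs to be verified. By the left-invariance of Lebesgue measure together with $B(p, r) = p\cdot B(0, r)$, the identity to be proved reduces, for $p = (z_0, t_0)$, to
$$P^1_{0,k}(z_0, t_0)\,|B(0, r)| \;=\; \int_{B(0,r)} P^1_{0,k}\bigl((z_0, t_0)\cdot(w, s)\bigr)\, dw\, ds,$$
so it suffices to evaluate the right-hand side.

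The plan is to expand the integrand using the Heisenberg product formula and exploit two measure-preserving symmetries of the Folland--Kaplan ball $B(0, r) = \{(w, s) : |w|^4 + s^2 \le r^4\}$, namely the unitary rotation $(w, s)\mapsto(e^{i\theta}w, s)$ and the reflection $(w, s)\mapsto(w, -s)$. These together force $\int_{B(0,r)} w^a \bar w^b s^c\, dw\, ds = 0$ unless $a = b$ and $c$ is even. Using $2\,\mathrm{Im}(z_0\bar w) = -i(z_0\bar w - \bar z_0 w)$ and $|z_0 + w|^2 = |z_0|^2 + z_0\bar w + \bar z_0 w + |w|^2$, the bracketed factor of $P^1_{0,k}(p\cdot q)$ expands to
$$(1+k)(t_0 + s) + ik|z_0|^2 + ik|w|^2 - i\, z_0\bar w + i(2k+1)\,\bar z_0 w,$$
which multiplies $(z_0 + w)^k = \sum_{j=0}^k \binom{k}{j} z_0^{k-j} w^j$.

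Term by term, the surviving contributions are: the constants $(1+k)t_0 z_0^k$ and $ik|z_0|^2 z_0^k$ paired with the $j=0$ summand (integrated against $1$); the mixed term $ik|w|^2 z_0^k$ from $ik|w|^2$ paired with $j=0$; and the mixed term $-i z_0\bar w \cdot k z_0^{k-1} w = -ik z_0^k |w|^2$ from $-iz_0\bar w$ paired with $j=1$. The $(1+k)s$ term is killed by the $t$-reflection, while $i(2k+1)\bar z_0 w$ never finds a balancing $\bar w$ in the binomial expansion and drops out. The two $|w|^2$ contributions cancel exactly, leaving $[(1+k)t_0 + ik|z_0|^2]\, z_0^k \cdot |B(0, r)| = P^1_{0,k}(z_0, t_0)\cdot |B(0, r)|$, which is the required identity. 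The case of $P^1_{k, 0}$ is handled by an identical calculation with the roles of $w$ and $\bar w$ exchanged, or alternatively by complex conjugation.

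The delicate point -- essentially the entire content of the computation -- is the cancellation of the two $|w|^2$ integrals: the coefficient of $z_0\bar w$ inside the bracket equals $ik - i(1+k) = -i$, and the binomial factor $\binom{k}{1} = k$ then supplies a matching $-ik$ that precisely offsets the $+ik$ coming from $ik|w|^2$. This numerical coincidence reflects the specific integer constants $(1+k)$ and $k$ built into $P^1_{0,k}$ by Kor\'anyi's formula, and clarifies why strong harmonicity is not expected for the general $P^m_{0,k}$, consistent with the existence of spherical harmonics that fail to be strongly harmonic, see Example~\ref{ex-sph}.
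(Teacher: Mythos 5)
Your proof is correct and follows essentially the same route as the paper's: reduce by left invariance to an integral over the gauge ball $B(0,R)$, expand $P^1_{0,k}(pq)$ via the group law, and observe that after averaging in the angular variable only the terms reproducing $P^1_{0,k}(p)$ survive, thanks to the same key cancellation of the $|w|^2$ (in the paper, $r^2$) contributions coming from the coefficients $1+k$ and $k$. The only difference is cosmetic: you dispose of the angular integrals by rotation and $t$-reflection invariance of the Kor\'anyi ball, whereas the paper computes them explicitly in cylindrical coordinates via residues together with the explicit constants $\int_0^R\sqrt{R^4-r^4}\,r\,dr=\tfrac{\pi}{8}R^4$ and $|B(p,R)|=\tfrac{\pi^2}{2}R^4$; your bookkeeping avoids these constants altogether.
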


\begin{proof} The proof relies on using the cylindrical coordinates. Notice, that since $P^1_{k,0}(z,t)= \overline{P^1_{0,k}(z,t)}$, it suffices to check that $P^1_{k,0}(z,t)$ is strongly harmonic for all $k$.

If $p=(z_0,t_0)$ and $q=(z,t)$, then we have
\begin{align*}
P^1_{0,k}(pq) &=\left((1+k)(t+t_0 +2 {\rm Im}(z_0 \bar z))+ik|z+z_0|^2\right)(z+z_0)^k.
\end{align*}
In cylindrical coordinates $(z,t)=(re^{i \theta},t)$ and the above function takes the following form:
\begin{align}
 P^1_{0,k}(p (re^{i \theta},t)) &=\big ((1+k) \big ( \, t_0+t+2r( \, y_0  \cos(\theta)-x_0  \sin(\theta) \, ) \, \big )(re^{i \theta}+z_0)^k \nonumber \\
& \quad + i\, k \big ( \, r^2 + 2r( x_0  \cos(\theta)+ y_0  \sin(\theta)) + x_0^2+y_0^2 \, \big ) (re^{i \theta}+z_0)^k. \label{intP1}
\end{align}
Moreover, for any ball $B(p,R)\subset G$ we have:
\begin{align*}
\int_{B(p,R)} P^1_{0,k}(q)dq &=\int_{B(0,R)} P^1_{0,k}(pq)dq =\int_0^R \int_{-\sqrt{R^4-r^4}}^{\sqrt{R^4-r^4}}\int_{-\pi}^{\pi} P^1_{0,k}(p (re^{i \theta},t)) \, d\theta\, dt\, r dr.
\end{align*}
In view of \eqref{intP1}, inner integral $\int_{-\pi}^{\pi} P^1_{0,k}(p (re^{i \theta},t)) \, d\theta$ requires the following integrals which we evaluate with residues:
\begin{align*}
& \int_{-\pi}^{\pi} (re^{i\theta} +z_0)^k d \theta  %\int_{S(0,r)} \frac{(z+z_0)^k}{iz} \, dz
 = \int_{S(0,r)} (z+z_0)^k \, \frac{dz}{iz}
 = 2 \pi z_0^k, \\
& \int_{-\pi}^{\pi} \cos(\theta)(re^{i\theta} +z_0)^k d \theta = r^k \int_{-\pi}^{\pi} \cos(\theta) (e^{i\theta} +\frac{z_0}{r})^k d \theta = r^k\int_{S(0,1)} \frac{1}{2} (z+ z^{-1} ) (z+ r^{-1} z_0 )^k \,  \frac{dz}{iz}
=k\pi r z_0^{k-1},\\
&\int_{-\pi}^{\pi} \sin(\theta)(re^{i\theta} +z_0)^k d \theta = r^k\int_{S(0,1)} \frac{1}{2i} (z- z^{-1} ) (z+ r^{-1} z_0 )^k \,  \frac{dz}{iz} =i\, k\pi r z_0^{k-1}.
\end{align*}
It now follows from \eqref{intP1} that
\begin{align*}
\int_{-\pi}^{\pi} P^1_{0,k}(p (re^{i \theta},t)) \, d\theta&= 2 \pi \Big ( (1+k) (t_0+t)  \, + \,  i   \ \,k|z_0|^2 \, \Big )z_0^k.
\end{align*}
The remaining integrations are straight forward, as we have
\begin{align*}
\int_{-\sqrt{R^4-r^4}}^{\sqrt{R^4-r^4}} \int_{-\pi}^{\pi} P^1_{0,k}(p (re^{i \theta},t)) \, d\theta dt &=  4 \pi  \Big (  (k+1)t_0  + i\,   k|z_0|^2 \Big )z_0^k\sqrt{R^4-r^4}
\end{align*}
and
\begin{align*}
\int_0^R \int_{-\sqrt{R^4-r^4}}^{\sqrt{R^4-r^4}} \int_{-\pi}^{\pi} P^1_{0,k}(p (re^{i \theta},t)) \, d\theta dt r dr &=  4 \pi  \Big (  (k+1)t_0  + i\,   k|z_0|^2 \Big )z_0^k\int_0^R \sqrt{R^4-r^4}\,r dr\\
&=  4 \pi  \Big (  (k+1)t_0  + i\,   k|z_0|^2 \Big )z_0^k \frac{\pi}{8}R^4.
\end{align*}
Since $$ \int_{B(p,R)}dq = \frac{\pi^2}{2}R^4 $$ we obtain that
\[
 P^1_{0,k}((z_0,t_0))=\vint_{B(p,R)} P^1_{0,k}(q)dq
\]
and hence, $P^1_{0,k}$ is strongly harmonic in $\Hei$.
\end{proof}

\begin{ex}\label{ex-sph}

The first spherical harmonic that is not strongly harmonic is
$$
P^2_{0,0}(z,t)=2t^2-|z|^4.
$$
Indeed, in this case computations similar to the one in the proof of the observation reveal that
$$
 \vint_{B(p,R)}P^2_{0,0}(q)dq = P^2_{0,0}(p)+ \frac{R^4}{4}.
$$
\end{ex}

Computation with MAPLE up to homogenous degree $40$ revealed no strongly harmonic spherical harmonics with the order of $t$ greater than $1$. Thus, one might suspect that strongly harmonic spherical harmonics are precisely those with the order of $t$ is less or equal to $1$ and a computational proof similar to the above might reveal it to be true. However, the computations become cumbersome due to the combinatorics arising from the coefficients $C(l,j)$ at \eqref{eq2-sph-har} and repeated use of the binomial formula. Considering the mean values at $0$ does not simplify the task, indeed  all the spherical harmonics are strongly harmonic at $0$.

For example let us consider
\begin{align*}
P_{k,0}^m(z, t)= r_{k,0}^m \big ( t +ir^2 ,  t -ir^2 \big ) z^k.
\end{align*}

As above, let us apply the cylindrical coordinates $(re^{i \theta},t)$ and denote $z:=rw$ for $w \in S(0,1)$.
Upon considering the mean value of $P_{k,0}^m$ at $p=0$, it follows that
\begin{align*}
\int_{-\pi}^\pi P_{k,0}^m(re^{i\theta},t)d \theta &=   \int_{S(0,1)} P_{k,0}^m(rw,t)\, \frac{dw}{iw}
= -i r^{k} r_{k,0}^m \big ( t +ir^2 ,  t -ir^2 \big ) \int_{S(0,1)} w^{k-1} \, dw
=0, \quad \hbox{for }k \geq 1.
\end{align*}
 Therefore, the mean value property for $P_{k,0}^m$ holds at the origin.

 It is interesting to note that if the strongly harmonic spherical harmonics are exactly the spherical harmonics of $t$ degree less or equal to one, then strongly harmonic spherical harmonics are solutions to the the Laplace-Beltrami equation $$ \tilde X^2 u + \tilde Y^2 u + \frac{\partial^2 u}{ \partial t^2}=0.$$ Moreover, by analytic hypoellipticity, the same can be said for strongly harmonic functions on $B(0,1)$.

  In view of results of this section we pose the following problems.

\smallskip
\noindent {\bf Open problems}
\begin{itemize}
\item [(1)] Identify all spherical harmonic polynomials $P_{k,0}^m$ that are strongly harmonic.
\item[(2)] Describe other classes of functions in Carnot-Carat\'eodory groups that are both $\opl$-harmonic and strongly harmonic.
\end{itemize}

\section{Determining set}\label{sec-det}

 Let $\Om\subset \Hei$. We say that $S\subset \Om$ is the \emph{determining set} if for any $u:\Om\to \R$ strongly harmonic in set $S$, it follows  that $u\in \harm(\Om)$. For the studies of determining sets for harmonic function in the Euclidean setting we refer to Flatto~\cite{Flatto}. In this section we show that for continuous functions it is enough to assume the mean value property on a dense subset of $\Om$ in order to infer the harmonicity in the whole domain $\Om$. The additional technical  assumption is that $\Hei$ must be a geodesic space with respect to the underlying metric $d$.

 \begin{observ}
  Let $S\subset \Om$ be dense in $\Om$. Furthermore, let us suppose that metric $d$ on $\Hei$ is such that $\Hei$ is geodesic space with respect to $d$. Then it holds that, if $u\in \harm(S)\cap C(\Om)$, then $u\in \harm(\Om)$.
 \end{observ}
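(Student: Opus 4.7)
Fix any $p_0\in\Om$ and any $r>0$ with $B(p_0,r)\Subset\Om$. The goal is to show
\[
u(p_0)=\vint_{B(p_0,r)} u(q)\,dq.
\]
Since $\overline{B(p_0,r)}$ is a compact subset of the open set $\Om$, there exists $\delta>0$ such that the larger set $K:=\overline{B(p_0,r+\delta)}$ is still a compact subset of $\Om$. The plan is to pick a sequence $p_n\in S$ converging to $p_0$ in $d$ (possible by density), apply the mean value property at each $p_n$, and then pass to the limit.

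For $n$ large enough, $d(p_n,p_0)<\delta$, and so $B(p_n,r)\subset K\Subset\Om$; thus the hypothesis $u\in\harm(S)$ gives
\[
u(p_n)=\vint_{B(p_n,r)} u(q)\,dq.
\]
The left-hand side converges to $u(p_0)$ by the continuity of $u$ at $p_0$. To handle the right-hand side, I would reuse the decomposition that appears in the proof of Theorem~\ref{thm-diff-harm-fun}: writing the difference of the two averages with a common denominator and splitting it into a symmetric-difference term and a volume-discrepancy term, we obtain
\[
\left|\vint_{B(p_n,r)}\!u\,dq-\vint_{B(p_0,r)}\!u\,dq\right|
\le\frac{|B(p_n,r)\,\triangle\, B(p_0,r)|}{|B(p_n,r)|}\,\|u\|_{L^\infty(K)}
+\frac{\bigl||B(p_n,r)|-|B(p_0,r)|\bigr|}{|B(p_n,r)|\,|B(p_0,r)|}\,\|u\|_{L^1(K)}.
\]
Here $\|u\|_{L^\infty(K)}<\infty$ because $u\in C(\Om)$ and $K$ is compact. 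The second term vanishes identically: left-invariance of the Lebesgue measure on the Carnot group $\Hei$ gives $|B(p,r)|=r^Q|B(0,1)|$ independently of the center $p$.

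The remaining step, which is the only place where the geodesic assumption on $(\Hei,d)$ enters, is to show that the symmetric-difference term tends to $0$. Since $(\Hei,d,\mu)$ is a geodesic doubling metric measure space (geodesic by hypothesis, doubling because $\mu$ is the Lebesgue measure and $d$ is induced by a homogeneous pseudonorm equivalent to the sub-Riemannian distance), Proposition~\ref{prop-dbl}(a) applies and yields $|B(p_n,r)\,\triangle\, B(p_0,r)|\to 0$ as $p_n\to p_0$. Combining the three limits, we conclude
\[
u(p_0)=\lim_{n\to\infty}u(p_n)=\lim_{n\to\infty}\vint_{B(p_n,r)}u(q)\,dq=\vint_{B(p_0,r)}u(q)\,dq,
\]
as required. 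The only genuinely nontrivial input is Proposition~\ref{prop-dbl}(a); the rest is straightforward bookkeeping, so I do not anticipate a serious obstacle.
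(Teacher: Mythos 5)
Your argument is correct and follows essentially the same route as the paper: approximate the point by a sequence in the dense set $S$, apply the mean value property at those points, and control the difference of the two ball averages via the symmetric-difference term, which tends to zero by Proposition~\ref{prop-dbl}(a) (metric continuity of the Lebesgue measure in a geodesic doubling space), together with continuity of $u$. The only cosmetic difference is that you dispose of the volume-discrepancy term outright by left-invariance of the measure, whereas the paper keeps the ratio $|B_d(p,r)|/|B_d(p_n,r)|$ and lets it tend to $1$; both are fine.
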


 \begin{proof}
  Let $p\in \Om\setminus S$ and $(p_n)$ be a sequence of points in $S$ such that $p_n\to p$, as $n\to \infty$ with respect to the given metric $d$. Let $\epsilon>0$ and $N$ be such that for all $n>N$ it holds that
  \[
   -\epsilon+u(p_n)\leq u(p) \leq \epsilon+u(p_n),
  \]
  by continuity of $u$ at $p$. Then, the following estimate holds for any $r>0$ and all balls $B(p_n,r)\Subset \Om$, $B(p, r)\Subset \Om$:
  \begin{align}
   u(p)\leq \ep+u(p_n)&=\ep+\frac{1}{|B_d(p,r)|}\frac{|B_d(p,r)|}{|B_d(p_n,r)|}\left(\int_{B_d(p,r)}u(q)+\int_{B_d(p,r)\setminus B_d(p_n,r)}u(q)\right) \nonumber \\
   &\leq \ep+\frac{|B_d(p,r)|}{|B_d(p_n,r)|} u_{B_d(p,r)}+\int_{B_d(p,r)\Delta B_d(p_n,r)}u(q), \label{observ-aux}
  \end{align}
  where $B_d(p,r)\Delta B_d(p_n,r)$ stands for the symmetric difference between balls. Observe that $\frac{|B_d(p,r)|}{|B_d(p_n,r)|}\to 1$, as $n\to \infty$. Moreover, since $\Hei$, equipped with metric $d$, is a geodesic space, then by Proposition~\ref{prop-dbl}, the Lebesgue measure is metrically continuous with respect to $d$, cf. \eqref{m-cont-m} above, see also Definition 2.1 and the discussion in \cite[Section 2]{agg} for further details. In particular, the following symmetric difference satisfies condition \eqref{m-cont-m}:
  \[
  |B_d(p,r)\,\Delta\, B_d(p_n,r)|\to 0,\quad \hbox{ as }\quad n\to\infty.
  \]
  By applying the Lebesgue convergence theorem we conclude that the second integral in \eqref{observ-aux} converges to $0$, while the first one to mean value of $u$ at $p$. Note that, since $u\in L^1_{loc}(\Om)$, then by invoking again the metric continuity of the measure with respect to $d$, one obtains that
  \[
  \left|\int_{B_d(p,r)\Delta B_d(p_n,r)}u(q)\right|\to 0,\qquad \hbox{ as } n\to \infty.
  \]
  Thus, we also get that $u(p)\geq -\ep+u_{B_d(p,r)}$ and the arbitrary choice of $\ep$ allows us to conclude that $u(p)=u_{B_d(p,r)}$. Hence, the proof is completed.
  \end{proof}

\appendix
\section{Appendix - the proof of Theorem~\ref{thm-Lharm}}

In the appendix we prove Theorem~\ref{thm-Lharm}, which states that $\mathcal{L}$-harmonic functions satisfy a variant of the mean value property with respect to kernels defined via gradient of pseudonorm given by the fundamental solution of $\opl$. We begin with preliminaries regarding the representation of $\opl$ in the divergence form.

 Fix an orthonormal basis $\{ E_i \}_{i=1}^N$ such that $\g^1={\rm span} \, \{ E_i \, | \,  i = 1,\dots, N_1 \}$
 and
 $$
 \tilde X_i  u(p) =\frac{d}{dt}u(p\exp(t E_i ))|_{t=0}\quad \hbox{ for }i=1,2,\ldots, N.
 $$
 Recall that $N=N_1+\ldots+N_s$, cf. Definition~\ref{defn-basis}. If $x_i$, for $i=1,2,\ldots, N$ denote the coordinates on $G$ induced by the chosen basis of $\mathfrak{g}$ via the exponential map, then
  $$
  \tilde X_i u=\sum_{j=1}^{N} dx_{j} (\tilde X_{i}) \dbd{}{x_{j}}.
  $$
 In coordinates we have
 \begin{align}
  \mathcal{L}u  = \sum_{i=i_0}^{N_1}  \tilde X_i^2u &= \sum_{j=1}^{N} \sum_{k=1}^{N}   \sum_{i=1}^{N_1}  dx_{j}(\tilde X_{i}) \dbd{}{x_{j}} \left ( dx_{k}(\tilde X_{i}) \dbd{u}{x_{k}}\right ) \nonumber \\
 &=  \sum_{j=1}^{N} \sum_{k=1}^{N}   \sum_{i=1}^{N_1}  \dbd{}{x_{j}}\left ( dx_{j}(\tilde X_{i})  dx_{k}(\tilde X_{i})\dbd{u}{x_{k}}  \right ) \label{eq-kern}\\
 &={\rm div} A\nabla u,  \nonumber
 \end{align}
where the coefficients of matrix $A$ are given by the formula $A_{j, k} =\sum_{i =1}^{N_1} dx_j (\tilde X_i)  dx_k (\tilde X_i)$ for $j, k=1,\ldots, N$. Note that the third equality \eqref{eq-kern} uses the identity $\dbd{}{x_{j}} \left (dx_{j}(\tilde X_{i}) \right )=0$ for each $i=1,\ldots, N_1$ which follows from the nilpotency of $G$.

Expanding in coordinates as above we also get $\langle \nabla_0 u , \nabla_0 u \rangle_{G} = \langle A \nabla u   \nabla u \rangle_{\R^N}$ and note that a mapping $(p,q) \to \langle \nabla_0 u , \nabla_0 u \rangle (p^{-1} q)$ is left invariant and homogeneous of degree $0$ with respect to dilation since $\nabla_0 u$ has degree $0$.

\begin{proof}[Proof of Theorem~\ref{thm-Lharm}]  By direct calculation, it is easy to check that the following Green's identity holds:
 \begin{align*} v  \mathcal{L}u -u  \mathcal{L}v= {\rm div} \left (  v A\nabla u - u A \nabla v \right ).
 \end{align*}

  Since $u$ is $\mathcal{L}$-harmonic in $\Om\subset G$ and $v=\mathcal{N}^{2-Q} \circ \tau_{p^{-1}} =-\Gamma \circ \tau_{p^{-1}}$, then $v$ is $\mathcal{L}$-harmonic on $G\setminus \{p\}$ and
 \begin{align*}
  \int_{B(p,r) \setminus B(p,\eps)} {\rm div} \left ( v A\nabla u - u A \nabla v    \right )(q) dq=0.
 \end{align*}
  Applying Stokes theorem gives
 \begin{align} \label{inv1}
 \int_{\partial B(p,r)}    u A \nabla v (\nu) \cdot n(p,\nu) d S(\nu) = \int_{\partial B(p,\eps)}   u A \nabla v (\nu) \cdot n(p,\nu) d S(\nu),
 \end{align}
 where $dS$ is defined via the Gramm determinant. Note that we have used the following consequences of the choice of $u$ and $v$:
 \begin{align*}
 \int_{\partial B(p,\rho)}    v A \nabla u (\nu) \cdot n(p,\nu) d S(\nu) &=\rho^{2-Q}\int_{\partial B(p,\rho)}    A \nabla u (\nu) \cdot n(p,\nu) d S(\nu)\\
 &=\rho^{2-Q}\int_{\partial B(p,\rho)}   {\rm div} A \nabla u (q)  dq\\
 &=\rho^{2-Q}\int_{\partial B(p,\rho)}   \mathcal{L}u (q)  dq =0.
 \end{align*}
We define the following kernel $K(p,q)$, cf. Definition 5.5.1 and the proof of Theorem 5.5.4 in \cite{blu}:
 \begin{align*}
 K(p,q): = (2-Q)\mathcal{N}(p^{-1}q)^{1-Q}  \frac{\langle \nabla_0 \mathcal{N}( p^{-1} q), \nabla_0 \mathcal{N}( p^{-1} q) \rangle}{\|\nabla (\mathcal{N} \circ\tau_{p^{-1}})(q)\| } := (2-Q)\mathcal{N}(p^{-1}q)^{1-Q}  \mathcal{K} (p,q),
 \end{align*}
 where $p,q\in G$ and $p\not=q$.
 
 Next, we define
 \begin{align*}
 T_r(u)(p):=\int_{\partial B(p,r)}   u(q) K(p,q) dS(q)\quad \hbox{ for any }p\in G,
 \end{align*}
 where $r>0$ is such that $B(p,r) \Subset \Omega$. It turns out that transform $T_r$ satisfies mean value property. Namely, by \eqref{inv1} we have $T_r(u)(p)=T_\eps(u)(p)$ for all $0<\eps <r $. It then follows that
 \begin{align*}
 |T_r(u)(p)-u(p)T_r(\chi_\Omega)(p)| & \leq  \int_{\partial B(p,\eps)} |u(q)-u(p)| |K(p,q)|  dS(q)
  \leq  \frac{1}{Q-2}\sup_{\partial B(p,\eps)} |u(q)-u(p)||T_r(\chi_\Omega)(p)|.
 \end{align*}
 The continuity of $u$ implies the following: %"$T_r$ Mean Value Property":
 \begin{align}
 T_r(u)(p) = u(p)T_r(\chi_\Omega)(p).  \label{TrMean}
 \end{align}
 Furthermore, since $T_r(u)(p)=T_t(u)(p)$ for all $t <r $, we have
 \begin{align*}
 T_r(u)(p)r^Q&=T_r(u)(p)\int_0^rQt^{Q-1}dt = \int_0^r T_t(u)(p)Qt^{Q-1}dt\\
 &=\int_0^r \int_{\partial B(p,t)}   u(\nu) K(p,\nu) dS(\nu)Qt^{Q-1}dt\\
 &=Q(2-Q)\int_0^r \int_{\partial B(p,t)}   u(\nu) \mathcal{K}(p,\nu) dS(\nu) dt\\
 &=Q(2-Q)\int_{B(p,r)}   u(q) \mathcal{K}(p,q) \|\nabla (\mathcal{N} \circ \tau_{p^{-1}})(q)\| dq \\
 &=Q(2-Q)\int_{B(p,r)}   u(q) \langle \nabla_0 \mathcal{N}( p^{-1} q), \nabla_0 \mathcal{N}( p^{-1} q) \rangle dq.
 \end{align*}
 Finally, from \eqref{TrMean} we get the assertion of Theorem~\ref{thm-Lharm} for all $p\in G$ and all balls $B(p,r)\Subset \Om$:
 \begin{align*}
 u(p) &= \frac{\int_{B(p,r)}   u(q)|\nabla_0 \mathcal{N}( p^{-1} q)|^2 dq }{\int_{B(p,r)} |\nabla_0 \mathcal{N}( p^{-1} q)|^2  dq} =\frac{\int_{B(0,r)}   u(pq) |\nabla_0 \mathcal{N}( q)|^2 dq }{\int_{B(0,r)} |\nabla_0 \mathcal{N}(q)|^2  dq}.
 \end{align*}
\end{proof}

 As mentioned in Section~\ref{sect5}, these computations indicate that $\mathcal{L}$-harmonic functions need not necessarily be strongly harmonic.

\end{document}